\DeclareMathAlphabet{\mathcalligra}{T1}{calligra}{m}{n}
\newtheorem{theorem}{Theorem}
\newtheorem{lemma}[theorem]{Lemma}
\newtheorem{proposition}[theorem]{Proposition}
\newtheorem*{theorem*}{Theorem}
\theoremstyle{definition}
\theoremstyle{remark}
\newtheorem*{remark}{Remark}
\newtheorem*{remarks}{Remarks}
\newcommand{\R}{\mathbb{R}}
\newcommand{\Q}{\mathbb{Q}}
\newcommand{\Z}{\mathbb{Z}}
\newcommand{\N}{\mathbb{N}}
\newcommand{\C}{\mathbb{C}}
\renewcommand{\H}{\mathbb{H}}
\newcommand{\F}{\mathbb{F}}
\newcommand{\ord}{{\text {\rm ord}}}
\newcommand{\tr}{\operatorname{tr}}
\newcommand{\sgn}{\operatorname{sgn}}
\newcommand{\SL}{{\text {\rm SL}}}
\renewcommand{\pmod}[1]{\  \,  \left(  \operatorname{mod} \,  #1 \right)}
\DeclareMathAlphabet{\mathpzc}{OT1}{pzc}{m}{it}
\numberwithin{equation}{section}
\numberwithin{theorem}{section}
\begin{document}
	
	\vspace*{-1cm}
	
\title[Moments of class numbers in arithmetic progressions]{Distribution of moments of Hurwitz class numbers in arithmetic progressions and holomorphic projection}
\author{Ben Kane}
\address{Department of Mathematics, University of Hong Kong, Pokfulam, Hong Kong}
\email{bkane@hku.hk}
\author{Sudhir Pujahari}
\address{School of Mathematical Sciences, National Institute of Science Education and Research, Bhubaneswar, An OCC of Homi Bhabha National Institute,  P. O. Jatni,  Khurda 752050, Odisha, India.}
\email{ spujahari@niser.ac.in/ spujahari@gmail.com}
\keywords{holomorphic projection, elliptic curves, trace of Frobenius, Hurwitz class numbers} 
\subjclass[2010]{11E41, 11F27,11F37, 11G05}
\thanks{The research of the first author was supported by grants from the Research Grants Council of the Hong Kong SAR, China (project numbers HKU 17301317, and 17303618).  Most of the research was conducted while the second author was a postdoctoral fellow at The University of Hong Kong. He thanks the university for providing excellent facilities and a productive environment and would also like to thank the Institute of Mathematical Research at the University of Hong Kong for providing partial financial assistance to the postdoctoral fellowship.}
\date{\today}
\begin{abstract}
In this paper, we study moments of Hurwitz class numbers associated to imaginary quadratic orders restricted into fixed arithmetic progressions. 
In particular, we fix $t$ in an arithmetic progression $t\equiv m\pmod{M}$ and consider the ratio of the $2k$-th moment to the zeroeth moment for $H(4n-t^2)$ as one varies $n$. The special case $n=p^r$ yields as a consequence asymptotic formulas for moments of the trace $t\equiv m\pmod{M}$ of Frobenius on elliptic curves over finite fields with $p^r$ elements.
\end{abstract}
\maketitle

\section{Introduction and statement of results}\label{sec:intro}
Class numbers of imaginary quadratic orders and the corresponding class numbers of integral binary quadratic forms have a long and storied history going back at least to Gauss, who asked for a classification of all binary quadratic forms with a given class number. Dirichlet's class number formula relates the class numbers with $L$-functions, another central area of study, and the growth of class numbers is in turn closely related to the generalized Riemann hypothesis; see \cite{BhandMurty} for a detailed history and an introduction to the development of the subject.

There are many identities and relations between sums of class numbers and interesting arithmetic functions. To state one such interesting identity that motivates our study in this paper, let $\mathcal{Q}_D$ denote the set of integral binary quadratic forms of discriminant $D<0$. The \begin{it}$|D|$-th Hurwitz class number\end{it} is defined by
\[
H(|D|):=\sum_{Q\in\mathcal{Q}_{D}/\SL_2(\Z)} \frac{1}{\omega_Q},
\]
where $\omega_Q$ is half the size of the stabilizer group $\Gamma_Q$ of $Q$ in $\SL_2(\Z)$. By convention, we set $H(0):=-\frac{1}{12}$ and $H(r):=0$ for $r\notin\N_0$ or $r\equiv 1,2\pmod{4}$. For a prime $p$, a famous identity relating sums of these class numbers with an elementary function is given by  (see \cite[p. 154]{Eichler})
\[
\sum_{t\in\Z} H\left(4p-t^2\right) = 2p.
\]
In \cite{BrownCalkin}, similar identities were studied in the case when $t$ is restricted to a given arithmetic progression, leading to further investigation of such sums of class numbers. For example, identities such as 
\[
\sum_{\substack{t\in \Z\\ t\equiv 1\pmod{5}}} H\left(4p-t^2\right)=\begin{cases} 
\frac{1}{3}(p+1)&\text{if }p\equiv 1,2\pmod{5},\\
\frac{1}{2}(p-1) &\text{if }p\equiv 3\pmod{5},\\ 
\frac{5}{12}(p+1)&\text{if }p\equiv 4\pmod{5}
\end{cases}
\]
were proven in \cite{BKClassNum} and \cite{BrownCalkin}. In this vein, we further consider sums of moments of these Hurwitz class numbers defined by 
\begin{equation}\label{eqn:HmMkdef}
H_{\kappa,m,M}(n):=\sum_{\substack{t\in\Z\\t\equiv m \pmod{M}}}t^{\kappa} H \left(4n-t^2\right).
\end{equation}
In the following we drop the condition $t\in\Z$ in the summation and throughout we omit $\kappa$ from the notation whenever $\kappa=0$. Moments of class numbers have occurred throughout the literature (for example, see \cite{BarbanGordover,Birch,Lavrik,Wolke1,Wolke2,Wolke3}). The main result of this paper is an asymptotic formula for the even moments in an arbitrary arithmetic progression, given in terms of the the zeroeth moment.  
\begin{theorem}\label{thm:HmMk}
Let $m,M,k\in \N$ be given. As $n\to\infty$, we have 
\[
\frac{H_{2k,m,M}(n)}{n^k H_{m,M}(n)} = C_k +O_{k,M,\varepsilon}\left(n^{-\frac{1}{2}+\varepsilon}\right).
\]
\end{theorem}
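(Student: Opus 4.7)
The plan is to interpret $H_{2k,m,M}(n)$ as the $n$-th Fourier coefficient of a non-holomorphic modular form of weight $2k+2$, apply the holomorphic projection operator, and split the resulting holomorphic form into an Eisenstein part (which produces the main term $C_k n^k H_{m,M}(n)$) and a cuspidal part (which furnishes the error).

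The starting point is Zagier's mock Eisenstein series $\mathcal{H}(\tau) = \sum_{n\geq 0}H(n)q^n$, which is the holomorphic part of a harmonic Maass form $\widehat{\mathcal{H}}$ of weight $3/2$. I would introduce the weighted theta series
\[
\Theta_{2k,m,M}(\tau) = \sum_{t\equiv m\pmod M} t^{2k}\,q^{t^2},
\]
of weight $2k+\tfrac12$ on an appropriate congruence subgroup (after splitting via the additive characters modulo $M$), so that, after the standard rescaling $\tau\mapsto 4\tau$ required to align discriminants, the holomorphic Fourier coefficients of $\widehat{\mathcal{H}}\cdot\Theta_{2k,m,M}$ recover $H_{2k,m,M}(n)$. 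Because the total weight $2k+2\geq 4$, the holomorphic projection integrals converge absolutely, and standard formulas (cf.~Gross--Zagier, Sturm, Mertens) realise $\pihol(\widehat{\mathcal{H}}\cdot\Theta_{2k,m,M})$ as a holomorphic modular form of weight $2k+2$ whose $n$-th coefficient equals $H_{2k,m,M}(n)$ plus an explicit, closed-form correction coming from the shadow $\mathcal{H}^-$.

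Decomposing this holomorphic form into an Eisenstein part plus a cusp form, I would then carry out the explicit holomorphic-projection computation to verify two things. First, the $n$-th Eisenstein coefficient, divided by $n^k H_{m,M}(n)$, converges to a constant $C_k$ depending only on $k$; the relevant $\Gamma$/$B$-integrals arising in the projection formula should collapse (via standard identities for $\int t^{2k} e^{-c t^2}\,dt$) to a rational multiple of the Catalan number $\tfrac{1}{k+1}\binom{2k}{k}$, matching the Sato--Tate prediction for the distribution of $t/(2\sqrt n)$. Second, the cuspidal component has Fourier coefficients bounded by $O_{k,M,\varepsilon}\bigl(n^{k+1/2+\varepsilon}\bigr)$ via Deligne's theorem applied in weight $2k+2$. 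Combining this with the lower bound $H_{m,M}(n)\asymp n$ (which itself follows from running the same argument in the $k=0$, weight-$2$ case and from the explicit identities quoted in the introduction) gives $n^k H_{m,M}(n)\asymp n^{k+1}$, so the cuspidal contribution becomes $O_{k,M,\varepsilon}\bigl(n^{-1/2+\varepsilon}\bigr)$ after dividing.

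The principal obstacle is the holomorphic-projection calculation itself: one must be explicit enough to extract $C_k$ in closed form and to verify that the shadow-correction terms either contribute to the main term $C_k n^k H_{m,M}(n)$ (as expected) or else fall within the stated error. A secondary but nontrivial burden is tracking level, character, and the restriction $t\equiv m\pmod M$ through the projection procedure, ensuring that the Eisenstein contributions indexed by the various residue classes $a\pmod M$ recombine cleanly into a scalar multiple of the zeroth moment $H_{m,M}(n)$ rather than some other arithmetic function of $n$.
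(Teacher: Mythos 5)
Your overall strategy (realise the moment sum as coefficients of a weight $2k+2$ object, holomorphically project, bound the cuspidal piece by Deligne, divide by $H_{m,M}(n)\gg n^{1-\varepsilon}$) is the right spirit, but the very first step has a genuine gap: for $k\geq 1$ the weighted theta series $\Theta_{2k,m,M}(\tau)=\sum_{t\equiv m\pmod M}t^{2k}q^{t^2}$ is \emph{not} a modular form of weight $2k+\tfrac12$. Unary theta series $\sum_n\chi(n)n^{\nu}q^{n^2}$ are modular only for $\nu\in\{0,1\}$; for $\nu=2k\geq 2$ the series is merely quasimodular (a derivative of $\theta$), so the product $\widehat{\mathcal{H}}\cdot\Theta_{2k,m,M}$ does not transform with any weight and holomorphic projection cannot be applied to it. The correct modular object is the Rankin--Cohen bracket $[\widehat{\mathcal{H}},\theta_{m,M}]_k$, whose holomorphic part has $n$-th coefficient (after $U_4$) proportional to $G_{k,m,M}(n)=\sum_{t\equiv m\pmod M}p_{2k}(t,n)H(4n-t^2)$, where $p_{2k}(t,n)$ is the Gegenbauer-type polynomial from the expansion of $(1-tX+nX^2)^{-1}$ --- not the pure power $t^{2k}$. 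Passing from $G_{k,m,M}(n)$ back to $H_{2k,m,M}(n)$ costs you an explicit triangular relation involving all the lower moments $H_{2k-2\mu,m,M}(n)$ weighted by $n^{\mu}$, and this relation is where the actual work lies.

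This also undercuts your proposed source of the main term. In the paper there is no Eisenstein component producing $C_k n^kH_{m,M}(n)$: after adding the explicit shadow correction $\Lambda_{2k+1,m,M}$ (whose coefficients are $O(n^{k+1/2+\varepsilon})$ by a trivial count), the projected bracket is a \emph{cusp} form for $k\geq 1$, so $G_{k,m,M}(n)\ll_{k,M,\varepsilon}n^{k+1/2+\varepsilon}$ is entirely error term. The main term then emerges by induction on $k$ from the triangular relation: since $G_{k,m,M}$ is negligible, $H_{2k,m,M}(n)$ is asymptotically $-\sum_{\mu\geq 1}(-1)^{\mu}\binom{2k-\mu}{\mu}n^{\mu}H_{2k-2\mu,m,M}(n)$, and the constants satisfy the recurrence $\mathscr{C}_k=-\sum_{\mu=1}^k(-1)^{\mu}\binom{2k-\mu}{\mu}\mathscr{C}_{k-\mu}$, which is identified with the Catalan numbers by a generating-function computation --- not by evaluating $\Gamma$-integrals in the projection formula. (Morally this is the statement that the Chebyshev-type polynomials $p_{2k}$ are orthogonal to constants for the semicircle measure.) Separately, your claim that $H_{m,M}(n)\asymp n$ follows from the explicit identities in the introduction is unsupported for general $M$; the paper only proves $n^{1-\varepsilon}\ll_{M,\varepsilon}H_{m,M}(n)\ll_\varepsilon n^{1+\varepsilon}$ via Siegel's (ineffective) lower bound, which is all that is needed but is weaker than what you assert.
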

Due to a well-known relation between class numbers and the trace of Frobenius of elliptic curves from the influential works of Deuring \cite{Deuring}, Birch \cite{Birch}, and Schoof \cite{Schoof}, Theorem \ref{thm:HmMk} yields a corollary about moments related to the trace of Frobenius on elliptic curves over finite fields.  For an elliptic curve $E$ defined over the finite field $\F_{p^r}$ with $p^r$ elements ($p$ prime, $r\in\mathbb N$), the \begin{it}trace of Frobenius\end{it} is given by 
\begin{equation*}
\operatorname{tr}(E)=\operatorname{tr}_{p^r}(E):=p^r+1-\#E\left(\F_{p^r}\right). 
\end{equation*}
Here $E(\F_{p^r})$ is the set of points on the elliptic curve over the finite field $\F_{p^r}$. Our restriction on $t$ in \eqref{eqn:HmMkdef} corresponds to restricting the elliptic curve $E/\F_{p^r}$ to  the set 
\begin{equation*}
\mathcal{E}_{m,M,p^r}:=\{E/\F_{p^r}: \operatorname{tr}(E) \equiv m\pmod{M}\}.
\end{equation*}
The \begin{it}weighted $\kappa$-th moment with respect to $\operatorname{tr}(E)$\end{it} (for $\kappa\in\N_0$) is then given by 
\begin{equation}\label{eqn:SmMkdef}
S_{\kappa,m,M}(p^r):=\sum_{\substack{E/\F_{p^r}\\ \operatorname{tr}(E)\equiv m\pmod{M}}} \frac{\operatorname{tr}(E)^{\kappa}}{\#\operatorname{Aut}_{\F_{p^r}}(E)}=\sum_{E\in\mathcal{E}_{m,M,p^r}} \frac{\operatorname{tr}(E)^{\kappa}}{\#\operatorname{Aut}_{\F_{p^r}}(E)}.
\end{equation}
Before stating our first result, we discuss how $S_{\kappa,m,M}(p^r)$ is expected to grow as $p^r\to\infty$. For simplicity, we consider the case $n=p$ prime in this heuristic argument. By the Hasse bound \cite{Hasse}, conjectured by Artin~\cite{Artin} in his thesis, we have that $|\operatorname{tr}(E)|\leq 2\sqrt{p}$. Taking $-1\leq a\leq b\leq 1$ and a fixed elliptic curve $E$ over $\Q$ and considering the reduction of $E$ to elliptic curves over finite fields, it was independently conjectured by Sato and Tate (see e.g. \cite{Harris}) that if $E$ does not have complex multiplication, then
\[
\lim_{N\to\infty} \frac{\#\{p\leq N: 2a \sqrt{p}\leq  \operatorname{tr}(E)\leq 2 b \sqrt{p}\}}{\#\{p\leq N\}}=\frac{2}{\pi}\int_{a}^{b} \sqrt{1-x^2} dx.
\]
This was later proven in a series of collaborations between Barnet-Lamb, Clozel, Geraghty, Harris, Shepherd-Barron, and Taylor \cite{BGHT,CHT,HST}. Motivated by the Sato--Tate conjecture, Birch instead fixed a prime $p$ and varied $E/\F_{p}$, leading to an investigation of the sums $S_{\kappa,0,1}(p)$.  If there is no cancellation (this holds automatically for $\kappa$ even because every term in \eqref{eqn:SmMkdef} is non-negative), then one expects that for some constant $\mathscr{D}_{\kappa}$
\[
S_{\kappa,m,M}\left(p^r\right)\sim \mathscr{D}_{\kappa} \sum_{\substack{E/\F_{p^r}\\ \operatorname{tr}(E)\equiv m\pmod{M}}} \frac{p^{\frac{r\kappa}{2}}}{\#\operatorname{Aut}_{\F_{p^r}}(E)}= \mathscr{D}_{\kappa}p^{\frac{r\kappa}{2}}  S_{m,M}\left(p^r\right).
\]
Birch \cite{Birch} proved that these constants $\mathcal{D}_{\kappa}$ exist for $M=1$ and $r=1$ and moreover showed that they agree with the moments of the Sato--Tate distribution, proving the Sato--Tate conjecture for this family of elliptic curves. This work was influential, leading to a number of generalizations (for example, see \cite{AS,BrockGranville,DKS,DLZ,GM,Gekeler,KatzSarnak,McKee,SZ}).

In an orthogonal directiton, Castryck and Hubrechts \cite{CastryckHubrechts} studied the probability that $t$ lies in a given arithmetic progression, which is closely related to $S_{m,M}(p^r)$. Motivated by \cite{CastryckHubrechts} and \cite{Birch}, we generalize Birch's result by restricting the family of elliptic curves to $\mathcal{E}_{m,M,p^r}$. Specifically, we show that for $\kappa=2k \in 2\N$ the constant $\mathcal{D}_{2k}$  does indeed exist and equals the $k$-th Catalan number $C_k$ if $p$ is fixed and $r\to \infty$ or if $r\leq 2$ is fixed and $p\to\infty$.
\begin{theorem}\label{thm:SmMk}
Let $m\in\Z$, $M\in\N$ and $\varepsilon > 0$ be given. 
\noindent

\noindent
\begin{enumerate}[leftmargin=*,label={\rm(\arabic*)}]
\item
For primes $p \to \infty$, we have 
\begin{align*}
\frac{S_{2k,m,M}(p)}{p^k S_{m,M}(p)}=C_k+O_{k,M,\varepsilon}\left(p^{-\frac{1}{2}+\varepsilon}\right), \quad
\frac{S_{2k,m,M}\left(p^r\right)}{p^{rk}S_{m,M}\left(p^r\right)}=C_k+O_{k,M,r,\varepsilon}\left(p^{-1+\varepsilon}\right)\quad (r\geq 2). 
\end{align*}
\item 
Let $p>3$ be a prime for which $p\nmid \gcd(m,M)$ and $k\in\N$. As $r\to \infty$, we have 
\[
\frac{S_{2k,m,M}\left(p^r\right)}{p^{rk}S_{m,M}\left(p^r\right)}=C_k+O_{k,p,M,\varepsilon}\left(p^{\left(-\frac{1}{2}+\varepsilon\right)r}\right). 
\]
\end{enumerate}

\end{theorem}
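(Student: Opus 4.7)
The plan is to reduce Theorem \ref{thm:SmMk} to Theorem \ref{thm:HmMk} via the Deuring--Schoof correspondence. By \cite{Deuring,Schoof}, for every ordinary trace (those $t$ with $\gcd(t,p)=1$), the weighted count of elliptic curves $E/\F_{p^r}$ with $\operatorname{tr}(E) = t$ is exactly $H(4p^r-t^2)$, while for the finitely many supersingular traces (characterized by Waterhouse's theorem) this weighted count differs from $H(4p^r-t^2)$ by an explicit quantity of size $O(p^{r/2+\varepsilon})$. Writing
\[
S_{\kappa,m,M}(p^r) = H_{\kappa,m,M}(p^r) + \Delta_\kappa(p^r),
\]
where $\Delta_\kappa$ collects the supersingular contributions in the progression $t\equiv m\pmod{M}$, the task becomes to estimate $\Delta_\kappa$ and then combine with Theorem \ref{thm:HmMk}.

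For $p > 3$, the supersingular trace set lies in $\{0\}$ when $r$ is odd and in $\{0,\pm p^{r/2},\pm 2p^{r/2}\}$ when $r$ is even; the primes $p \in \{2,3\}$ add only $O(1)$ further traces and are absorbed into the same bounds. Each supersingular term therefore contributes at most $|t|^\kappa \cdot O(p^{r/2+\varepsilon})$, yielding
\[
|\Delta_\kappa(p^r)| \ll_{k,\varepsilon} p^{r\kappa/2 + r/2 + \varepsilon}.
\]
Crucially, the trace $t=0$ contributes zero whenever $\kappa = 2k \geq 2$, so for $r=1$ and $k \geq 1$ one has $\Delta_{2k}(p)=0$ exactly, while for $\kappa=0$ the correction is merely $O(p^{r/2+\varepsilon})$.

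To conclude, a classical Hurwitz--Kronecker identity applied to the arithmetic progression $t \equiv m \pmod M$ yields the lower bound $H_{m,M}(p^r) \gg_M p^r$: the hypothesis $p \nmid \gcd(m,M)$ in Part 2 is precisely what guarantees that enough ordinary traces populate the progression, while in Part 1 this is automatic once $p > M$. Applying Theorem \ref{thm:HmMk} then gives $H_{2k,m,M}(p^r) \asymp_k p^{rk+r}$, so expanding the denominator as a geometric series yields
\[
\frac{S_{2k,m,M}(p^r)}{p^{rk}S_{m,M}(p^r)} = \frac{C_k + O\bigl((p^r)^{-1/2+\varepsilon}\bigr) + O\bigl(p^{-r/2+\varepsilon}\bigr)}{1 + O\bigl(p^{-r/2+\varepsilon}\bigr)} = C_k + O\bigl((p^r)^{-1/2+\varepsilon}\bigr).
\]
Specializing to $r \in \{1,2\}$ produces the two estimates of Part 1, and letting $r \to \infty$ with $p$ fixed gives Part 2. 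The main technical care in the argument lies in the $r=2$ case of Part 1, where the nonzero supersingular traces $t = \pm p, \pm 2p$ do contribute to the numerator and one must compare the supersingular weighted counts against $H(3p^2)$ and $H(0) = -\tfrac{1}{12}$ precisely enough to recover the claimed $O(p^{-1+\varepsilon})$ bound; the deep input is of course Theorem \ref{thm:HmMk} itself.
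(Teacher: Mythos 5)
There is a genuine gap in your reduction $S_{\kappa,m,M}(p^r)=H_{\kappa,m,M}(p^r)+\Delta_\kappa(p^r)$ with $\Delta_\kappa$ consisting only of supersingular corrections. The Deuring--Schoof--Waterhouse correspondence gives $2N_A(p^r;t)=H(4p^r-t^2)$ only for \emph{ordinary} traces $p\nmid t$, and the admissible supersingular traces are indeed the handful you list; but $H_{\kappa,m,M}(p^r)$ sums $t^{\kappa}H(4p^r-t^2)$ over \emph{all} $t\equiv m\pmod M$ with $t^2\leq 4p^r$, including the roughly $4p^{r/2-1}$ integers $t$ with $p\mid t$, $0<t^2<4p^r$, which are not the trace of any elliptic curve over $\F_{p^r}$ (the ``otherwise'' case $N_A(p^r;t)=0$) yet contribute nonzero class-number terms. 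Their total contribution is of order $p^{rk+r-1}$ (by Lemma \ref{lem:HDHp^2relCohen} it is essentially $p^{2k+1}H_{2k,m\overline{p},M}(p^{r-2})$, as in Lemma \ref{lem:HpHrel}~(2)), so your bound $|\Delta_{2k}(p^r)|\ll p^{rk+r/2+\varepsilon}$ fails already for $r=3$, and your final error $O\bigl((p^r)^{-1/2+\varepsilon}\bigr)$ is not attainable by this route for $r\geq 3$: the true relative size of the omitted terms is $p^{-1}$, which is exactly why the theorem asserts only $O(p^{-1+\varepsilon})$ for $r\geq 2$. Your argument is sound for $r=1$ (where $\Delta$ essentially vanishes) and happens to give the right answer for $r=2$ (where the only such $t$ are $\pm p$), but Part 1 for general $r\geq2$ and all of Part 2 are not established.

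The failure is most severe in Part 2: with $p$ fixed and $r\to\infty$, the multiples of $p$ contribute a constant \emph{proportion} of the sum, so they cannot be absorbed into any error term at all. The paper instead removes them structurally, via $\mathscr{H}_{\kappa,m,M}(p;n)=\sum_{\ell}H_{\kappa,m+M\ell,Mp}(n)$ (Lemma \ref{lem:HpHrelGeneral}), applying Theorem \ref{thm:HmMk} to each progression modulo $Mp$; the hypothesis $p\nmid\gcd(m,M)$ enters to guarantee a nonempty such progression and hence a lower bound for the denominator $\mathscr{H}_{m,M}(p^r)$, not merely for $H_{m,M}(p^r)$. A secondary issue: your lower bound $H_{m,M}(p^r)\gg_M p^r$ from ``a classical Hurwitz--Kronecker identity'' is not available for general $M$; the paper only obtains $H_{m,M}(n)\gg_{\varepsilon,M}n^{1-\varepsilon}$ via Siegel's ineffective bound (Lemma \ref{lem:HmM0lower}), which suffices but changes the quality of the constants.
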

\begin{remarks}
\noindent

\noindent
\begin{enumerate}[leftmargin=*,label={\rm(\arabic*)}]
\item
For $M=1$, these sums were studied by Birch \cite{Birch} and implicitly appear in the work of Ihara \cite{Ihara} (see also \cite[Theorem 1, Theorem 2]{KaplanPetrow2}). They obtained a formula for these sums in terms of the trace of Hecke operators that yields the asymptotic obtained in Theorem \ref{thm:SmMk}.
For $M=2$, formulas for $S_{2k,m,2}$ were obtained by Kaplan and Petrow (see for example \cite[Theorem 8]{KaplanPetrow}). 
\item
The implied constant in the error term is ineffective due to an ineffective lower bound in Lemma \ref{lem:HmM0lower} that uses Siegel's ineffective lower bound \cite{Siegel} for the class numbers of imaginary quadratic fields (see Lemma \ref{lem:Siegel} below). Using Littlewood's conditional effective bound for the class numbers \cite{Littlewood}, it can be made effective under the Generalized Riemann Hypothesis. Moreover, for fixed $M$ one should in principle be able to obtain an effective version of Lemma \ref{lem:HmM0lower} by computing the Eisenstein series components of certain modular forms; this was carried out for primes $M\leq 7$ in \cite[Section 4]{BKClassNum} and \cite[Corollary 7.3]{Me}.
\item A number of papers have investigated sums where the elliptic curves are restricted to those elliptic curves containing specified subgroups (see \cite[Section 6.2]{Kowalski} and \cite{KaplanPetrow2}). For $M$ squarefree, the special case $m=p^r+1$ overlaps with such results because $\tr(E)\equiv p^r+1\pmod{M}$ is equivalent to $M\mid \#E(\F_{p^r})$, which in turn implies that the set of \emph{$M$-torsion points}
\[
E[M]:=\{P\in E: \operatorname{ord}(P)\mid M\}
\]
is non-empty. Here $\operatorname{ord}(P)$ means the order of the point under the group law defined on elliptic curves. To sttate the special case of Theorem \ref{thm:SmMk} in this case, we denote the subset of torsion points of precise order $M$ by
\[
E^*[M]:=\{P\in E: \operatorname{ord}(P)= M\}
\]
and define 
\[
S_{\kappa,M}^*({p^r}):=\sum_{\substack{E/\F_{p^r}\\ E^*[M]\neq \emptyset}} \frac{\operatorname{tr}(E)^{\kappa}}{\#\operatorname{Aut}_{\F_{p^r}}(E)}.
\]
Then Theorem \ref{thm:SmMk} implies the following. 
\noindent
\begin{enumerate}[leftmargin=*,label={\rm(\arabic*)}]
\item As $p\to\infty$, we have 
\begin{align*}
\frac{S_{2k,M}^*\left(p\right)}{p^{k}S_{M}^*\left(p\right)}&=C_k+O_{k,M\varepsilon}\left(p^{-\frac{1}{2}+\varepsilon}\right),\qquad
\frac{S_{2k,M}^*\left(p^r\right)}{p^{rk}S_{M}^*\left(p^r\right)}=C_k+O_{k,M,r,\varepsilon}\left(p^{-1+\varepsilon}\right)\quad (r\geq 2). 
\end{align*}
\item 
If $p>3$ is a prime, then as $r\to \infty$ we have
\[
\frac{S_{2k,M}^*\left(p^r\right)}{p^{rk}S_{M}^*\left(p^r\right)}=C_k+O_{k,p,M,\varepsilon}\left(p^{\left(-\frac{1}{2}+\varepsilon\right)r}\right). 
\]
\end{enumerate}
\end{enumerate}
\end{remarks}

After some preliminary setup in Section \ref{sec:prelim}, we begin by investigating Hurwitz class numbers and then the moments in Section \ref{sec:holprojH}, proving Theorem \ref{thm:HmMk}. We then return to the application of these moments to elliptic curves in Section \ref{sec:EllCurveApplication}, proving Theorem \ref{thm:SmMk}.
\rm

\section*{Acknowledgements}
The authors are indebted to Kathrin Bringmann for many helpful suggestions and detailed discussions and also thank Yuk-Kam Lau for helpful discussion. 
\section{Preliminaries}\label{sec:prelim}

\subsection{Holomorphic and non-holomorphic modular forms}
We give a brief overview of the theory of modular forms here; for details, see \cite{Koblitz,OnoBook}. For $d$ odd, we set
\[
\varepsilon_{d}:=\begin{cases} 1 &\text{if }d\equiv 1\pmod{4}\hspace{-1pt},\\ i&\text{if }d\equiv 3\pmod{4}\hspace{-1pt}.\end{cases}
\]
\noindent Let $\Gamma$ be a congruence subgroup containing $T:=\left(\begin{smallmatrix}1&1\\ 0 &1\end{smallmatrix}\right)$ and if $\kappa \in \frac{1}{2}\in\mathbb Z$, then we also require that $\Gamma \subseteq \Gamma_0(4)$. 
 A function $F:\H\to\C$ satisfies \begin{it}modularity of weight $\kappa\in\frac{1}{2}\Z$ on $\Gamma\subseteq \SL_2(\Z)$ \end{it}\textit{with character $\chi$} if for every $\gamma=\left(\begin{smallmatrix}a&b\\ c&d\end{smallmatrix}\right)\in\Gamma$ we have 
\[
F|_{\kappa}\gamma  = \chi(d) F.
\]
Here the weight $\kappa$ \begin{it}slash operator\end{it} is defined by 
\[
F\big|_{\kappa}\gamma(\tau):= \left( \frac cd \right)^{2\kappa} \varepsilon_d^{2\kappa}(c\tau+d)^{-\kappa} F(\gamma\tau),
\]
 where $(\frac{\cdot}{\cdot})$ denotes the \begin{it}extended Legendre symbol\end{it}. We call $F$ a \begin{it}(holomorphic) modular form\end{it} if $F$ is holomorphic on $\H$ and $F(\tau)$ grows at most
 polynomially in $v$ as $\tau=u+iv\to \Q\cup\{i\infty\}$. To define certain non-holomorphic modular forms, let $\Delta_{\kappa}:=-v^2(\frac{\partial^2}{\partial u^2}+\frac{\partial^2}{\partial v^2})+i\kappa v(\frac{\partial}{\partial u}+i\frac{\partial}{\partial v}) $ be the \emph{weight $\kappa$ hyperbolic Laplace operator}.
A smooth function $F$ transforming modular of weight $\kappa$ is a \emph{harmonic Maass form of weight $\kappa$} if $\Delta_{\kappa}(F)=0$ and 
 there exists $a\in\R$ such that 
\begin{equation*}\label{eqn:fgrowth}
F(\tau)=O\left(e^{a v}\right)\text{ as }v\to \infty\qquad\text{ and }\qquad F(u+iv)=O\left(e^{\frac{a}{v}}\right)\text{ for }u\in\Q\text{ as }v\to 0^+.
\end{equation*}
If $F$ is moreover holomorphic on $\H$, then we call $F$ a \begin{it}weakly holomorphic modular form\end{it}.  For a harmonic Mass form $F$ of weight $\kappa$,  $\xi_{\kappa}(F)$ with $\xi_{\kappa}:=2iv^{\kappa} \overline{\frac{\partial}{\partial \overline{\tau}}}$ is a weakly holomorphic modular form of weight $2-\kappa$. 

 Suppose that $\kappa\neq 1$.
 Letting $\Gamma(\alpha,x)$ denote the incomplete gamma function, a harmonic Maass form of weight $\kappa$ on $\Gamma$ has a Fourier expansion of the form 
\[
F(\tau)=F^+(\tau)+F^{-}(\tau)
\]
with (for $q:=e^{2\pi i \tau }$)
\begin{align*}
F^+(\tau)&=\sum_{n\gg -\infty} c_F^+(n) q^n\\
F^-(\tau)&=c_F^-(0) v^{1-\kappa} + \sum_{0\neq n\ll \infty} c_F^-(n) \Gamma\left(1-\kappa,-4\pi n v\right) q^{n}.
\end{align*}
Another type of non-holomorphic modular form that naturally occurs is an \begin{it}almost holomorphic modular form\end{it}, which is a function $F:\H\to\C$ satisfying weight $\kappa$ modularity on $\Gamma$ for which there exist holomorphic functions $F_j$ ($0\leq j\leq \ell$) such that $F(\tau)=\sum_{j=0}^{\ell} F_j(\tau) v^{-j}$. We call $F_0$ a \begin{it}quasimodular form\end{it}.

There are natural operators that preserve modularity. In particular,  suppose that $F(\tau)=\sum_{n\geq n_0} c_{F,v}(n)q^n$ satisfies weight $\kappa$ modularity with Nebentypus character $\chi$ (of modulus $N$) on $\Gamma_0(N)\cap\Gamma_1(M)$ with $M\mid N$.  We have that
\[
F\big|V_\delta(\tau):=F(\delta\tau)
\]
satisfies weight $\kappa$ modularity on $\Gamma_0(\operatorname{lcm}(4,\delta N))\cap \Gamma_1(M)$ with Nebentypus $\chi\cdot(\frac{\delta}{\cdot})^{2k}$, and
\[
F\big|U_\delta(\tau):=\sum_{n\geq n_0} c_{F,\frac{v}{\delta}}(\delta n) q^n
\]
satisfies weight $\kappa$ modularity on $\Gamma_0(\operatorname{lcm}(4,N,\delta))\cap \Gamma_1(M)$ with Nebentypus $\chi\cdot(\frac{\delta}{\cdot})^{2k}$.

\subsection{Rankin-Cohen brackets}
For $F_1,F_2$ transforming like modular forms of weight  $\kappa_1,\kappa_2 \in \frac 12\Z$,  respectively, define  for $k \in \N_0$ the $k$-th {\it Rankin-Cohen bracket}
\begin{equation*}
[F_1,F_2]_k := \frac{1}{(2\pi i)^k}\sum_{j=0}^{k} (-1)^j \binom{\kappa_1 + k -1}{k-j} \binom{\kappa_2 + k -1}{j} F_1^{(j)} F_2^{(k-j)}
\end{equation*}
with  $\binom{\alpha}{j}:=\frac{\Gamma(\alpha+1)}{j!\Gamma(\alpha-j+1)}$. Then $[F_1,F_2]_k$ transforms modular form of weight $\kappa_1+\kappa_2+2k$.

\subsection{Elliptic curves and trace of Frobenius}

A good introduction to elliptic curves is \cite{Silverman}. 
For an elliptic curve $E$ defined over $\F_{p^r}$, we define the \begin{it}Frobenius endomorphism\end{it} $\operatorname{Fr}$ from $E$ to itself via the $p$-th power map. Namely, for a point $P=(X,Y)\in E$, we set
\[
\operatorname{Fr}(P):=\left(X^p,Y^p\right).
\]
The trace of Frobenius is given by $\operatorname{tr}(E)$. For $r=1$, Hasse \cite{Hasse} showed that 
\[
|\operatorname{tr}(E)|\leq 2\sqrt{p}.
\]
The distribution of $\frac{1}{2\sqrt{p}} \operatorname{tr}(E)$ has been well-studied and it is natural to group those elliptic curves whose trace of Frobenius agree. 
For $t\in\Z$, we hence define 
\begin{equation*}
\mathcal{E}_{p^r,t}:=\{E/\F_{p^r}: \operatorname{tr}(E)=t\}.
\end{equation*}
As is well-known, there is a group law defined on elliptic curves and the automorphisms of the group we denote by $\operatorname{Aut}_{\F_{p^r}}(E)$. The automorphism group gives a natural weighting on the elliptic curves in $\mathcal{E}_{{p^r},t}$, leading to the definition
\begin{equation*}
N_A(p^r;t):=\sum_{E\in\mathcal{E}_{{p^r},t}} \frac{1}{\#\operatorname{Aut}_{\F_{p^r}}(E)}.
\end{equation*}
These sums naturally occur when investigating $S_{\kappa,m,M}({p^r})$ because
\begin{equation*}\label{eqn:SmMksum}
S_{\kappa,m,M}({p^r}) = \sum_{t\equiv m\pmod{M}} t^{\kappa}N_A({p^r};t).
\end{equation*}

\subsection{The class number generating function}
Let 
\[
\mathcal{H}(\tau):=\sum_{n\in\Z} H(n) q^n
\]
be the generating function for the Hurwitz class numbers. Its modular properties follow by  \cite[Theorem 2]{HZ}.
\begin{theorem}\label{thm:Hcomplete}
The function 
\begin{equation*}
\widehat{\mathcal{H}}(\tau):=\mathcal{H}(\tau) +\frac{1}{8\pi \sqrt{v}}+ \frac{1}{4\sqrt{\pi}}\sum_{n=1}^\infty n\Gamma\left(-\frac12, 4\pi n^2 v\right)q^{-n^2}
\end{equation*}
is a harmonic Maass  form of weight $\frac{3}{2}$ on $\Gamma_0(4)$.
\end{theorem}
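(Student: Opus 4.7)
The plan is to realize $\widehat{\mathcal{H}}$ as an explicit non-holomorphic Eisenstein series of weight $\tfrac32$ on $\Gamma_0(4)$, thereby obtaining both modularity and harmonicity simultaneously. Following Zagier, I would start with the half-integral weight Eisenstein--Poincar\'e series
\[
E(\tau,s) := \sum_{\gamma \in \Gamma_\infty \backslash \Gamma_0(4)} v^s \big|_{3/2} \gamma,
\]
which converges absolutely for $\operatorname{Re}(s)$ sufficiently large and admits meromorphic continuation in $s$. By construction it transforms with weight $\tfrac32$ modularity on $\Gamma_0(4)$ (the multiplier system being that of the Jacobi theta function $\theta(\tau)=\sum_{n\in\Z}q^{n^2}$), and this modular transformation is inherited by the analytic continuation.

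Next, I would compute the Fourier expansion of $E(\tau,s)$ by applying Poisson summation in $d$ for each fixed $c$ and evaluating the resulting half-integral weight Kloosterman sums as Sali\'e sums in closed form. For a non-square index $n>0$, the Sali\'e sum together with the remaining Dirichlet series factors gives (via Dirichlet's class number formula for non-maximal orders of discriminant $-4n$, combined with Gauss's genus-theoretic formula relating Hurwitz class numbers to $L(1,\chi_D)$) exactly the coefficient $H(n)$ as $s\to 0$. The $n=0$ Fourier coefficient produces the constant $H(0)=-\tfrac{1}{12}$ together with the non-holomorphic term $\tfrac{1}{8\pi\sqrt v}$. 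The crucial feature is that at square indices $-n=-k^2$ the Sali\'e evaluation fails to vanish and the remaining Whittaker integral contributes the incomplete gamma terms $\frac{1}{4\sqrt\pi}\sum_{k\geq 1} k\,\Gamma(-\tfrac12,4\pi k^2 v)q^{-k^2}$; identifying this with the claimed completion yields the explicit formula for $\widehat{\mathcal H}$.

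Finally, I would verify harmonicity directly on the Fourier expansion. Each term $H(n)q^n$ is killed by $\Delta_{3/2}$ since holomorphic functions are in the kernel; the term $v^{-1/2}=v^{1-3/2}$ is annihilated because it is the canonical weight $\tfrac32$ harmonic term at $\infty$; and for each $k\geq 1$ a short computation using $\xi_{3/2}\bigl(\Gamma(-\tfrac12,4\pi k^2 v)q^{-k^2}\bigr)=c\cdot q^{k^2}$ (with $c$ a nonzero constant) combined with $\Delta_{3/2}=-\xi_{2-3/2}\circ\xi_{3/2}$ shows the unary theta contribution $\sum_k k\,\Gamma(-\tfrac12,4\pi k^2v)q^{-k^2}$ lies in $\ker\Delta_{3/2}$. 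The growth condition at cusps follows because $E(\tau,s)|_{s=0}$ grows at most polynomially, which transfers to each cusp via the modularity.

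The main obstacle is the explicit Fourier expansion at $s=0$: one must carefully analytically continue past a potential pole of the Eisenstein series (or, equivalently, subtract a suitable multiple of $\theta(\tau)^3$ times a correction before continuation) and then match the resulting Dirichlet series with $L(1,\chi_{-4n})$ via the class number formula for orders. Handling the distinction between squares and non-squares in the Sali\'e sum contribution is the delicate step that ultimately produces precisely the non-holomorphic completion asserted in the statement.
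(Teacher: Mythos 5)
Your sketch is essentially the canonical argument behind this statement, which the paper itself does not prove but simply quotes from Hirzebruch--Zagier \cite[Theorem 2]{HZ}: one realizes the completion as the analytically continued weight $\frac{3}{2}$ Eisenstein series on $\Gamma_0(4)$ (Hecke trick), computes the Fourier expansion so that non-square indices yield $H(n)$ via the class number formula while square indices $-k^2$ produce the incomplete gamma terms, and checks harmonicity termwise using $\Delta_{3/2}=-\xi_{1/2}\circ\xi_{3/2}$. The outline is correct and consistent with the cited source, with the understanding that the substantive computations (the Sali\'e sum evaluation, the continuation to $s=0$, and the identification of the resulting Dirichlet series with $H(n)$ for non-fundamental discriminants via the divisor-sum relation \eqref{eqn:CohenHrel}) are asserted rather than carried out.
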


\subsection{Elliptic curves and class numbers} 

For $m\in\Z$, $n,M\in\N$, and $\kappa\in\N_0$, we next relate $S_{m,M,\kappa}(n)$ to certain sums of Hurwitz class numbers given by 
\[
\mathscr{H}_{\kappa,m,M}(p;n):=\sum_{\substack{t\equiv m\pmod{M}\\ p\nmid t}} t^{\kappa} H\left(4n-t^2\right).
\]
We are mostly interested in the case $n=p^r$ with $r\in\N$, which we abbreviate by $\mathscr{H}_{\kappa,m,M}(p^r):=\mathscr{H}_{\kappa,m,M}(p;p^r)$.  To state the result,  we set 
\begin{multline*}
E_{\kappa,m,M}\left(p^r\right):=\delta_{M\mid m}\delta_{\kappa=0}\delta_{2\nmid r} H(4p)+ \delta_{M\mid m}\delta_{\kappa=0}\delta_{2\mid r} \frac{1}{2}\left(1-\left(\frac{-1}{p}\right)\right)\\
 + \frac{1}{3}\left(1-\left(\frac{-3}{p}\right)\right)p^{\frac{r\kappa}{2}}\varrho_{\kappa,m,M}(p^r)+ \frac 13 (p-1) 2^{\kappa - 2}p^{\frac{r\kappa}{2}}\sigma_{\kappa,m,M}(p^r)
\end{multline*}
with 
\begin{align*}
\varrho_{\kappa,m,M}(p^r)&:=\displaystyle\sum_{\substack{t\equiv m\pmod{M}\\ t^2=p^r}}\sgn(t)^{\kappa},&\sigma_{\kappa,m,M}(p^r)&:=\displaystyle\sum_{\substack{t\equiv m\pmod{M}\\ t^2=4p^r}}\sgn(t)^{\kappa}.
\end{align*}
Here and throughout $\delta_{\mathcal{S}}:=1$
 if a statement $\mathcal{S}$ is true and $\delta_{\mathcal{S}}:=0$ otherwise.
We note that if $\kappa\in 2\N_0$, 
then $\varrho_{\kappa,m,M}(p^r)=\varrho_{m,M}(p^r)$, where 
\begin{align*}\label{eqn:varrhodef}
\varrho_{m,M}\left(p^r\right)&:=\#\left\lbrace t=\pm 2 p^{\frac{r}{2}}\in\Z:  t\equiv m\pmod{M}\right\rbrace,\\
\sigma_{m,M}\left(p^r\right)&:=\#\left\lbrace t=\pm p^{\frac{r}{2}}\in\Z: t\equiv m\pmod{M}\right\rbrace.
\end{align*}
\begin{lemma}\label{lem:S=Hp+E}
For a prime $p>3$, $\kappa\in\N_0$, and $r\in\N$ we have 
\begin{equation*}
2S_{\kappa,m,M}\left(p^r\right)=\mathscr{H}_{\kappa,m,M}\left(p^r\right)+E_{\kappa,m,M}(p^r).
\end{equation*}
\end{lemma}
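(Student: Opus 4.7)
The plan is to express $S_{\kappa,m,M}(p^r)$ in terms of the weighted counts $N_A(p^r;t)$ introduced in Section \ref{sec:prelim} and then apply the classical Deuring--Schoof correspondence between $N_A(p^r;t)$ and Hurwitz class numbers, together with Waterhouse's classification of the supersingular isogeny classes over $\F_{p^r}$. The starting identity is
\[
2S_{\kappa,m,M}(p^r)=\sum_{t\equiv m\pmod{M}}t^{\kappa}\cdot 2N_A(p^r;t),
\]
so the task reduces to evaluating $2N_A(p^r;t)$ in closed form and separating the contributions with $p\mid t$ (which are excluded from $\mathscr{H}_{\kappa,m,M}(p^r)$) from those with $p\nmid t$.

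For the ordinary range $p\nmid t$, Deuring's theorem in the form proved by Schoof gives $2N_A(p^r;t)=H(4p^r-t^2)$, so summing $t^{\kappa}$ over $t\equiv m\pmod{M}$ with $p\nmid t$ reproduces $\mathscr{H}_{\kappa,m,M}(p^r)$ on the nose.

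The remainder $E_{\kappa,m,M}(p^r)$ comes from the supersingular values $p\mid t$. For $p>3$, Waterhouse's theorem shows that the only such $t$ realized over $\F_{p^r}$ are $t=0$ and, when $r$ is even, $t=\pm p^{r/2}$ and $t=\pm 2p^{r/2}$; the potentially problematic $t=\pm p^{(r+1)/2}$ with $r$ odd (which could occur for $p\in\{2,3\}$) is excluded by hypothesis. Schoof's explicit formulas for the weighted counts on these isogeny classes give, respectively, $H(4p^r)$ (if $r$ is odd), $\tfrac12(1-\leg{-1}{p})$ (if $r$ is even), $\tfrac13(1-\leg{-3}{p})$, and $\tfrac{p-1}{12}$ for $2N_A(p^r;t)$. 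The two $t=0$ subcases are nonzero only when $\kappa=0$ and $M\mid m$, yielding the first two summands of $E_{\kappa,m,M}(p^r)$. For $t=\pm p^{r/2}$ and $t=\pm 2p^{r/2}$ one pulls out $t^{\kappa}=\sgn(t)^{\kappa}p^{r\kappa/2}$ and $t^{\kappa}=2^{\kappa}\sgn(t)^{\kappa}p^{r\kappa/2}$ respectively, encodes the residue condition $t\equiv m\pmod{M}$ via $\varrho_{\kappa,m,M}(p^r)$ and $\sigma_{\kappa,m,M}(p^r)$, and rewrites $\tfrac{2^{\kappa}(p-1)}{12}=\tfrac{(p-1)2^{\kappa-2}}{3}$ to produce the last two summands.

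The only real obstacle is bookkeeping: one must confirm that the Waterhouse admissibility conditions for the supersingular classes $t=0$ and $t=\pm p^{r/2}$ are correctly encoded by $1-\leg{-1}{p}$ and $1-\leg{-3}{p}$, and that no $p\mid t$ contribution slips into the ordinary sum. Given the Deuring--Schoof formula on the ordinary side and the four supersingular counts, the identity reduces to a direct rearrangement.
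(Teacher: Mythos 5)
Your proposal is correct and follows essentially the same route as the paper: both reduce the identity to the case-by-case evaluation of $2N_A(p^r;t)$ (ordinary classes via Deuring--Schoof giving $H(4p^r-t^2)$, plus the finitely many supersingular traces $t=0,\pm p^{r/2},\pm 2p^{r/2}$ for $p>3$) and then sum $t^{\kappa}$ over $t\equiv m\pmod{M}$. The only cosmetic difference is that the paper quotes this evaluation packaged as \cite[Theorem 3]{KaplanPetrow} rather than assembling it from Deuring, Schoof, and Waterhouse directly.
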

\begin{proof}
The claim easily follows, using that by \cite[Theorem 3]{KaplanPetrow}, for a prime $p>3$ and $r\in\N$ we have 
\begin{equation*}
\hspace{100pt}
2N_{A}\left(p^r;t\right)=
\begin{cases}
H\left(4p^r-t^2\right)&\text{if }t^2<4p^r,\ p\nmid t,\\
H(4p)&\text{if }t=0\text{ and $r$ is odd},\\ \vspace{2pt}
\frac{1}{2}\left(1-\left(\frac{-1}{p}\right)\right)&\text{if }t=0\text{ and $r$ is even},\\ \vspace{0.5pt}
\frac{1}{3}\left(1-\left(\frac{-3}{p}\right)\right)&\text{if }t^2=p^r,\\
\frac{1}{12}\left(p-1\right)&\text{if }t^2=4p^r,\\ 
0&\text{otherwise.} \hspace{133pt} \qedhere  
\end{cases} 
\end{equation*} 
\end{proof}
The sums $\mathscr{H}_{\kappa,m,M}(p;n)$ are related to $H_{\kappa,m,M}(n)$, as a direct calculation shows. 
\begin{lemma}\label{lem:HpHrelGeneral}
For $m\in\Z$, $M\in\N$, $p$ prime, and $\kappa\in\N_0$,  we have
\[
\mathscr{H}_{\kappa,m,M}(p;n) = \sum_{\substack{\ell\pmod{p}\\ p\nmid (m+M\ell)}}H_{\kappa,m+M\ell,Mp}(n).
\]
\end{lemma}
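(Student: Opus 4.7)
The proof plan is a direct partition argument; there is no real obstacle, only bookkeeping. I would proceed as follows.

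First, I would expand both sides. On the right one has
\[
\sum_{\substack{\ell\pmod p\\ p\nmid(m+M\ell)}} H_{\kappa,m+M\ell,Mp}(n)
=\sum_{\substack{\ell\pmod p\\ p\nmid(m+M\ell)}}\sum_{t\equiv m+M\ell\pmod{Mp}} t^{\kappa}H(4n-t^2),
\]
so the task reduces to showing that the double sum over $(\ell,t)$ matches the single sum
\[
\mathscr{H}_{\kappa,m,M}(p;n)=\sum_{\substack{t\equiv m\pmod M\\ p\nmid t}} t^{\kappa}H(4n-t^2).
\]

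The key observation I would isolate is the following partition statement: the $p$ residue classes $m+M\ell\pmod{Mp}$ with $\ell\in\{0,1,\dots,p-1\}$ are pairwise distinct modulo $Mp$ (since $M(\ell-\ell')\equiv 0\pmod{Mp}$ forces $\ell\equiv\ell'\pmod p$), and each of them reduces to $m\pmod M$. Since the set of residues mod $Mp$ lying in the class $m\pmod M$ has exactly $Mp/M=p$ elements, these $p$ classes exhaust that set. Hence the cosets $\{t\in\Z:t\equiv m+M\ell\pmod{Mp}\}$, as $\ell$ ranges over $\Z/p\Z$, form a disjoint partition of $\{t\in\Z:t\equiv m\pmod M\}$.

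Next I would observe that the extra condition is preserved cleanly: if $t\equiv m+M\ell\pmod{Mp}$, then in particular $t\equiv m+M\ell\pmod p$, so $p\mid t$ if and only if $p\mid (m+M\ell)$. Therefore restricting $\ell$ by $p\nmid(m+M\ell)$ on the right side corresponds exactly to restricting $t$ by $p\nmid t$ on the left side. Combining the partition with this equivalence, the double sum on the right collapses to $\mathscr{H}_{\kappa,m,M}(p;n)$, and the lemma follows. (No case distinction on whether $p\mid M$ is needed, as the partition argument works uniformly; when $p\mid m$ and $p\mid M$ both sides are empty, and when $p\nmid m$ with $p\mid M$ every $\ell$ contributes and both sides equal $H_{\kappa,m,M}(n)$.)
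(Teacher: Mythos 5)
Your proof is correct and is precisely the ``direct calculation'' that the paper invokes without writing out: the classes $m+M\ell\pmod{Mp}$ partition the class $m\pmod{M}$, and the condition $p\nmid t$ transfers to $p\nmid(m+M\ell)$ exactly as you say. Nothing further is needed.
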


\subsection{Generating functions for sums of moments of class numbers}
Taking the generating function of \eqref{eqn:HmMkdef}, for $m\in\Z$, $M\in\N$, and $\kappa\in\N_0$, we study sums of the type 
\begin{equation*}
\mathcal{H}_{\kappa,m,M}(\tau):= \sum_{n = 0}^{\infty} H_{\kappa,m,M}(n)q^n=\sum_{n = 0}^{\infty} \sum_{t \equiv m \pmod{M}} t^{\kappa} H\left(4n-t^2\right)q^n.
\end{equation*}
We directly see that 
\[
\mathcal{H}_{\kappa, m,M}=\left(\mathcal{H}\theta_{\kappa,m,M}\right)\big| U_4,
\]
where
\begin{equation*}
\theta_{\kappa,m,M}(\tau):=\sum_{n \equiv m \pmod M} n^{\kappa} q^{n^2}.
\end{equation*}

\subsection{Properties of class numbers}

We use the following bounds that follow from results of Siegel \cite{Siegel}.
\begin{lemma}\label{lem:Siegel}
For a discriminant $D<0$ and $\varepsilon>0$, we have 
\[
|D|^{\frac{1}{2}-\varepsilon}\ll_{\varepsilon} H(|D|)\ll_{\varepsilon} |D|^{\frac{1}{2}+\varepsilon}.
\]
\end{lemma}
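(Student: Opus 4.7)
The plan is to reduce the bounds to Siegel's classical lower bound and Dirichlet's upper bound for class numbers attached to \emph{fundamental} discriminants, and to control the passage to general discriminants via the well-known formula relating the class number of a non-maximal order to that of the maximal order in an imaginary quadratic field.

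First I would write $D = f^2 d$, where $d < 0$ is the fundamental discriminant underlying $D$, and recall the decomposition
\[
H(|D|) \;=\; \sum_{g \mid f} \frac{h(g^2 d)}{\tfrac12 w(g^2 d)},
\]
(with the standard minor adjustments for $d \in \{-3,-4\}$ coming from the extra units). Each class number of an order is in turn related to the class number of the maximal order via
\[
h(g^2 d) \;=\; \frac{h(d)\, g}{[\mathcal{O}_d^\times : \mathcal{O}_{g^2 d}^\times]} \prod_{p \mid g}\!\left(1 - \left(\frac{d}{p}\right)\frac{1}{p}\right).
\]
Thus the question is reduced to bounding $h(d)$ for fundamental $d$ and then controlling the multiplicative factors in $g$ and the sum over $g \mid f$.

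For the upper bound, I would combine Dirichlet's class number formula with the standard estimate $L(1,\chi_d) \ll \log|d|$, yielding $h(d) \ll |d|^{1/2}\log|d| \ll_\varepsilon |d|^{1/2+\varepsilon}$. The product $\prod_{p \mid g}(1+1/p)$ is $\ll_\varepsilon g^\varepsilon$, the unit-index quotient is bounded, and the number of divisors of $f$ is $\ll_\varepsilon |D|^\varepsilon$; putting these together gives
\[
H(|D|) \;\ll_\varepsilon\; d(|D|)\cdot h(d)\cdot f \cdot |D|^\varepsilon \;\ll_\varepsilon\; |d|^{1/2+\varepsilon}\cdot f \cdot |D|^{2\varepsilon} \;\ll_\varepsilon\; |D|^{1/2+3\varepsilon},
\]
which is the claimed upper bound after reindexing $\varepsilon$.

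For the lower bound I would invoke Siegel's (ineffective) theorem: $h(d) \gg_\varepsilon |d|^{1/2-\varepsilon}$ for fundamental $d<0$. Retaining only the term $g = f$ in the decomposition of $H(|D|)$ gives $H(|D|) \geq \tfrac13 h(D)$, and the order formula combined with $\phi(f) \gg_\varepsilon f^{1-\varepsilon}$ yields
\[
h(D) \;\geq\; \tfrac{1}{6}\,h(d)\,\phi(f) \;\gg_\varepsilon\; |d|^{1/2-\varepsilon}\,f^{1-\varepsilon} \;\gg_\varepsilon\; |D|^{1/2-\varepsilon},
\]
since $|D|^{1/2} = f\,|d|^{1/2}$. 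The main (conceptual, not technical) obstacle is Siegel's lower bound itself, which is precisely what makes the implied constant in the lemma ineffective, as noted in the remark following Theorem \ref{thm:SmMk}; every other step is routine bookkeeping with the order formula and standard $L$-value bounds.
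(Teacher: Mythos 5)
Your proof is correct. The paper offers no proof of this lemma at all --- it simply asserts that the bounds ``follow from results of Siegel'' --- and your argument (decompose $H(|D|)$ over the orders $g^2d$ with $g\mid f$, reduce each $h(g^2d)$ to $h(d)$ via the order class number formula, then apply Siegel's ineffective lower bound and the Dirichlet/$L(1,\chi_d)\ll\log|d|$ upper bound, with $\phi(f)\gg_\varepsilon f^{1-\varepsilon}$ and $d(f)\ll_\varepsilon f^\varepsilon$ handling the conductor) is exactly the standard route the authors intend. The only remark worth adding is that the paper's own identity \eqref{eqn:CohenHrel} packages your two displayed formulas into one: the multiplicative factor $\sum_{d\mid f}\mu(d)\left(\frac{\Delta}{d}\right)\sigma(f/d)=\prod_{p^a\| f}\left(\sigma(p^a)-\left(\frac{\Delta}{p}\right)\sigma(p^{a-1})\right)$ is bounded below by $f$ and above by $\ll_\varepsilon f^{1+\varepsilon}$ term by term, which shortens the bookkeeping slightly, but this is a cosmetic difference only.
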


If $D=\Delta f^2$ with $\Delta<0$ a fundamental discriminant and $f\in\N$, then (see \cite[p. 273]{Cohen})
\begin{equation}\label{eqn:CohenHrel}
H\left(|\Delta| f^2\right)= H(|\Delta|)\sum_{d\mid f}\mu(d)\left(\frac{\Delta}{d}\right)\sigma\left(\frac{f}{d}\right),
\end{equation}
where $\sigma(n):=\sum_{d\mid n} d$.
Setting $D_p:=\frac{D}{p^{2\alpha}}$ for an odd prime $p$ such that $2\alpha\leq \ord_p(D)\leq 2\alpha+1$, this leads to the following useful relation. 
\begin{lemma}\label{lem:HDHp^2relCohen}
If $D<0$ is a discriminant and $p$ is an odd prime, then
\[
H\left(|D|p^2\right)=pH(|D|) + \left(1-\left(\frac{D_p}{p}\right)\right)H\left(D_p\right).
\]
\end{lemma}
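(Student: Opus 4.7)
The plan is to apply Cohen's identity \eqref{eqn:CohenHrel} to both $H(|D|p^2)$ and $H(|D|)$ and read off the claim from the resulting comparison. Writing $D=\Delta f^2$ with $\Delta$ the fundamental discriminant of $D$, set $\beta:=\ord_p(f)$ and $g:=f/p^{\beta}$, so that $p\nmid g$ and $D_p=\Delta g^2$. In particular $\left(\frac{D_p}{p}\right)=\left(\frac{\Delta}{p}\right)$ (both equal $0$ if $p\mid\Delta$). Then \eqref{eqn:CohenHrel} applied to the conductors $fp$ and $f$ gives
\[
H(|D|p^2)=H(|\Delta|)\sum_{d\mid fp}\mu(d)\left(\frac{\Delta}{d}\right)\sigma(fp/d),\qquad H(|D|)=H(|\Delta|)\sum_{d\mid f}\mu(d)\left(\frac{\Delta}{d}\right)\sigma(f/d).
\]

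Next I would factor each sum according to the $p$-adic part of the divisor. Using complete multiplicativity of the Kronecker symbol together with multiplicativity of $\mu$ and $\sigma$, and noting that $\mu(p^j)=0$ for $j\geq 2$, the two sums become $A\cdot B_{\beta+1}$ and $A\cdot B_{\beta}$ respectively, where
\[
A:=\sum_{d\mid g}\mu(d)\left(\frac{\Delta}{d}\right)\sigma(g/d),\qquad B_n:=\sigma(p^n)-\left(\frac{\Delta}{p}\right)\sigma(p^{n-1}),
\]
with the convention $\sigma(p^{-1}):=0$, so that the edge case $\beta=0$ is handled uniformly. From the geometric formula one has $\sigma(p^{n+1})-p\,\sigma(p^n)=1$, and a short telescoping computation then yields $B_{\beta+1}-p\,B_{\beta}=1-\left(\frac{\Delta}{p}\right)$.

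Multiplying this relation through by $H(|\Delta|)\cdot A$, invoking \eqref{eqn:CohenHrel} once more in the form $H(|\Delta|)A=H(D_p)$ (Cohen's identity applied to the conductor $g$), and using $\left(\frac{\Delta}{p}\right)=\left(\frac{D_p}{p}\right)$, gives exactly the asserted identity. The only subtle point is the bookkeeping: one must verify that the factorisation of the Cohen sums into a common prime-to-$p$ part $A$ times a $p$-part $B_n$ is valid (which uses $p\nmid g$, so every divisor of $fp$ factors uniquely into a divisor of $g$ and a power of $p$), and confirm that $\left(\frac{\Delta}{p}\right)=\left(\frac{D_p}{p}\right)$ in all cases, including $p\mid\Delta$ where both vanish. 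Otherwise the argument is purely algebraic with no analytic input.
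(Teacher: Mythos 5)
Your proposal is correct and follows essentially the same route as the paper: both exploit the multiplicativity of the divisor sum in \eqref{eqn:CohenHrel} to split off the $p$-part (your $A\cdot B_{\beta}$ versus the paper's $H(|\Delta|g^2)$ times a $p$-local factor), then use the recursion $\sigma(p^{\ell})=1+\delta_{\ell\geq1}p\,\sigma(p^{\ell-1})$ and the identification $D_p=\Delta g^2$ with $\left(\frac{\Delta}{p}\right)=\left(\frac{D_p}{p}\right)$. The bookkeeping points you flag (unique factorisation of squarefree divisors of $fp$ into a divisor of $g$ times $p^j$, and the edge case $\beta=0$ via $\sigma(p^{-1}):=0$) are exactly the details the paper handles with its $\delta_{\alpha\geq1}$ notation, so nothing is missing.
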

\begin{proof}
Let $D=\Delta f^2$ with $\Delta<0$ a fundamental discriminant and $f\in\N$. Since the sum in \eqref{eqn:CohenHrel} is multiplicative, if $f=p^{\alpha}g$ with $\alpha \in \N_0$ and $p\nmid g$, then we can write 
\begin{align}
\label{eqn:HDHDelta}
H\left(|D|\right)&=H\left(|\Delta|g^2\right)\left(\sigma\left(p^{\alpha}\right)-\delta_{\alpha\geq 1}\left(\frac{\Delta}{p}\right) \sigma\left(p^{\alpha-1}\right)\right),\\
\label{eqn:HDp^2HDelta} H\left(|D|p^2\right)&= H\left(|\Delta|g^2 \right)\left(\sigma\left(p^{\alpha+1}\right)-\left(\frac{\Delta}{p}\right) \sigma\left(p^{\alpha}\right)\right).
\end{align}
Note that for $\ell\geq 0$ we have
\[
\sigma\left(p^{\ell}\right)=1+\delta_{\ell\geq 1}p \sigma\left(p^{\ell-1}\right).
\]
Comparing \eqref{eqn:HDp^2HDelta} with \eqref{eqn:HDHDelta} yields 
\[
H\left(|D|p^2\right) =pH(|D|)+ \left(1-\left(\frac{\Delta}{p}\right)\right)H\left(|\Delta|g^2\right).
\]
Since $p\nmid g$, we have $(\tfrac{g^2}{p})=1$ and hence $(\tfrac{\Delta}{p}) = (\tfrac{\Delta g^2}{p})$. The claim follows from the fact that $\Delta g^2 = D_p$.
 \qedhere
\end{proof}

\subsection{Holomorphic projection}
 Let  $F(\tau) = \sum_{n \in \Z} c_{F,v}(n) q^{n}$ be a (not necessarily holomorphic) function satisfying weight $\kappa\geq 2$ modularity. Suppose furthermore that  $F(\tau)-P_{i\infty}(q^{-1})$ has moderate growth, where $P_{i\infty} \in \C [x]$ and that a similar condition holds as $\tau\to \Q$. We define (see \cite[Proposition 5.1, p. 288]{GrossZagier} for the general statement and \cite{MOR} for it written in this generality) the \begin{it}holomorphic projection\end{it} of $F$
\begin{align*}
\pi_{\text{hol}}^{\text{reg}} (F) (\tau) := P_{i \infty} \left( q^{-1} \right) + \sum_{n =1}^\infty c_{F}(n) q^{n}.
\end{align*}
Here for $n \in \N$
\begin{align*}
c_{F}(n) := \frac{(4 \pi n)^{\kappa-1}}{\Gamma(\kappa-1)} \lim_{s\to 0} \int_0^{\infty} c_{F,v} (n) v ^{\kappa-2-s} e^{-4 \pi n v} d v.
\end{align*}
Note that for a non-holomorphic modular form $F$, there exists a unique cusp form $f$ satisfying $\langle F,g\rangle=\langle f,g\rangle$ for every cusp form $g$, where $\langle \cdot , \cdot \rangle$ denotes the Petersson inner product. 
The function $f$ is Sturm's \cite{Sturm} original definition for the holomorphic projection of $F$ and Gross and Zagier showed in \cite[Proposition 5.1, p.288]{GrossZagier} that Sturm's definition matches the definition given here if one additionally assumes that $F$ decays polynomially towards all cusps.
We have the following properties of $\pi_{\operatorname{hol}}^{\operatorname{reg}}(F)$ (see \cite[Proposition 5.1 and Proposition 6.2]{GrossZagier} as well as \cite[(4.6)]{Me}). 
\begin{lemma}\label{lem:holproj}
Suppose that $F$ is continuous and transforms modular of weight $\kappa\geq 2$ on $\Gamma_1(N)$. Then the following hold. 
\begin{enumerate}[leftmargin=*,label={\rm(\arabic*)}]
\item If $F$ is holomorphic, then $\pi_{\operatorname{hol}}^{\operatorname{reg}}(F)=F$.
\item If $\kappa>2$ 
and $F$ is bounded towards all cusps, then $\pi_{\operatorname{hol}}^{\operatorname{reg}}(F)$ is a holomorphic modular form. If $\kappa=2$, then $\pi_{\operatorname{hol}}^{\operatorname{reg}}(F)$ is a 
quasimodular form of weight two.
\item 
If $F_1$ is a weight $\kappa_1 \in\frac{1}{2}\N$ harmonic Maass form and  $f_2$ is a weight $\kappa_2\in\frac{1}{2}\N$ holomorphic modular form, and $k\in\N_0$ with $\kappa:=\kappa_1+\kappa_2+2k\geq 2$, then 
\[
\pi_{\operatorname{hol}}^{\operatorname{reg}}\left(\left[F_1,f_2\right]_{k}\right) = \left[F_1^+,f_2\right]_k +\pi_{\operatorname{hol}}^{\operatorname{reg}}\left(\left[F_1^-,f_2\right]_{k}\right).
\]
\end{enumerate}
\end{lemma}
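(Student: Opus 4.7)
The statement is quoted as a lemma collecting known properties of holomorphic projection from \cite{GrossZagier} and \cite{Me}, so the plan is essentially to reduce each part to an application of (or an adaptation of) the classical theory and compute directly where the Fourier expansion permits.

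For part (1), the plan is a direct calculation from the definition. If $F$ is holomorphic then $c_{F,v}(n)$ is independent of $v$, so I pull it out of the integral and evaluate
\[
\lim_{s\to 0}\int_0^{\infty} v^{\kappa-2-s}e^{-4\pi n v}\,dv=\lim_{s\to 0}\frac{\Gamma(\kappa-1-s)}{(4\pi n)^{\kappa-1-s}}=\frac{\Gamma(\kappa-1)}{(4\pi n)^{\kappa-1}},
\]
which cancels the prefactor exactly and yields $c_F(n)=c_{F,v}(n)$. For the principal part one simply notes that, by assumption, $F-P_{i\infty}(q^{-1})$ has moderate growth and the polar part is by definition retained unchanged, so $\pi_{\operatorname{hol}}^{\operatorname{reg}}(F)=F$.

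For part (2), the plan is to invoke Gross--Zagier \cite[Proposition 5.1]{GrossZagier}. The strategy there is the standard one: one shows that for a Poincar\'e series $P_{n,\kappa}$ of exponential type generating the $n$-th Fourier coefficient, the Petersson inner product $\langle F,P_{n,\kappa}\rangle$ unfolds into the regularized integral appearing in the definition of $c_F(n)$, so $\pi_{\operatorname{hol}}^{\operatorname{reg}}(F)$ reproduces the Fourier coefficients of the (unique) cusp form in the projection. The hypothesis $\kappa>2$ combined with boundedness towards cusps guarantees absolute convergence of the unfolding; for $\kappa=2$ convergence fails for the constant term, and the obstruction lands in the quasimodular completion, yielding a weight-two quasimodular form rather than a modular one. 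I will refer the reader to \cite[Proposition 5.1 and Proposition 6.2]{GrossZagier} and \cite[(4.6)]{Me} for the verification.

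For part (3), the plan is to use linearity of the Rankin--Cohen bracket in its first argument together with part (1). Writing $F_1=F_1^++F_1^-$ gives
\[
[F_1,f_2]_k=[F_1^+,f_2]_k+[F_1^-,f_2]_k.
\]
Since $F_1^+$ and $f_2$ are holomorphic and the Rankin--Cohen bracket of two holomorphic modular(-like) functions is a polynomial in holomorphic derivatives, $[F_1^+,f_2]_k$ is holomorphic, transforms with weight $\kappa$, and has at worst polynomial growth at cusps inherited from $F_1^+$ and $f_2$. Applying part (1) to $[F_1^+,f_2]_k$ and linearity of $\pi_{\operatorname{hol}}^{\operatorname{reg}}$ on its Fourier expansion yields the claimed identity.

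The only subtle point is verifying that the linearity step in (3) is legitimate: one must check that each summand individually has moderate growth (minus an explicit polar part at each cusp) so that $\pi_{\operatorname{hol}}^{\operatorname{reg}}$ is well-defined on it. This follows from the Fourier expansion of $F_1^\pm$ recorded before the lemma (the non-holomorphic piece $F_1^-$ decays like $\Gamma(1-\kappa_1,\cdot)$ away from cusps and contributes only a constant term of controlled growth), which is the main place where care is required; apart from this, the proof is a direct assembly of the three observations above.
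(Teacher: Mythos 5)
The paper gives no proof of this lemma at all---it is quoted from \cite[Propositions 5.1 and 6.2]{GrossZagier} and \cite[(4.6)]{Me}---and your sketch is a correct reconstruction of the standard arguments behind those references: the Mellin-integral computation for (1), the unfolding/Poincar\'e-series argument for (2), and bilinearity of the Rankin--Cohen bracket combined with the identity property on holomorphic expansions for (3). The one imprecision is that in (3) you invoke part (1) for $[F_1^+,f_2]_k$, which does not itself transform modularly (only the completed bracket $[F_1,f_2]_k$ does); this is harmless because $\pi_{\operatorname{hol}}^{\operatorname{reg}}$ is defined coefficient-wise and the computation in (1) uses only holomorphy and the growth hypothesis, not modularity, but it would be cleaner to say so explicitly rather than to cite (1) as stated.
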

\begin{remark}
Lemma \ref{lem:holproj} (3) appears in a slightly different form in \cite[(4.6)]{Me} because Mertens did not include the constant term in the definition of $F^-$ in \cite[(3.1)]{Me}. 
\end{remark}
Rearranging Lemma \ref{lem:holproj} (3) yields a formula for $[F_1^+,f_2]_{k}$. In \cite[Theorem 1.2]{Me}, Mertens considered the special case that $\kappa_1=\frac{3}{2}$, $\kappa_2=\frac{1}{2}$, and $\xi_{\frac{3}{2}}(F_1)$ and $F_2$ are both weight $\frac{1}{2}$ holomorphic modular forms on $\Gamma_1(N)$.  Serre and Stark showed in \cite[Theorem A]{SerreStark} that this space is spanned by unary theta functions. For a character $\chi$ and $a\in\N$,  these are given by $\theta_{\chi}\mid V_a$, where
\[
\theta_{\chi}(\tau):=\sum_{n\in\Z} \chi(n) q^{n^2}.
\]
Using Serre and Stark's classification, one may assume without loss of generality that $F_2=\theta_{\chi}\big|V_a$ and $\xi_{\frac{3}{2}}(F_1)=\theta_{\psi}\big|V_b$ for some characters $\chi,\psi$ and $a,b\in\N$.  In the proof of \cite[Theorem 1.2]{Me}, Mertens related the second term on the right-hand side of Lemma \ref{lem:holproj} (3) to 
\begin{equation*}
\Lambda_{\ell,a,b}^{\chi,\psi}(\tau):=2\sum_{n=1}^{\infty}\lambda_{\ell,a,b}^{\chi,\psi}(n)q^n,
 \qquad \text{where}\qquad
\lambda_{\ell,a,b}^{\chi,\psi}(n):=\sideset{}{^*}\sum_{\substack{at^2-bs^2=n\\ t\geq 1,s\geq 0}}\chi(t)\overline{\psi(s)}\left(t\sqrt{a}-s\sqrt{b}\right)^{\ell},
\end{equation*}
where here and throughout {\scriptsize$\sideset{}{^*}\sum$}  means that the terms in the sum with $s=0$ are weighted by $\frac{1}{2}$. 
Using the fact that $\pi_{\operatorname{hol}}^{\operatorname{reg}}([F_1,F_2]_{k})$ is a quasimodular form by Lemma \ref{lem:holproj} (2), one then obtains the following.
\begin{lemma}\label{lem:Mertens}
Suppose that $\chi$ and $\psi$ are characters of conductors $N_{\chi}$ and $N_{\psi}$, respectively and $N,a,b\in\N$ with $bN_{\psi}^2\mid N$. If $F$ is a harmonic Maass form of weight $\frac{3}{2}$ on $\Gamma_1(4N)$ that grows at most polynomially towards all cusps and satisfies  $\xi_{\frac{3}{2}}(F)=\theta_{\psi}\big|V_b$, then 
\[
\left(\left[F^+,\theta_{\chi}\big|V_a\right]_{k} -2^{3-2k}\pi\binom{2k}{k}  \Lambda_{2k+1,a,b}^{\chi,\psi}\right)\Big|U_4
\]
is a holomorphic cusp form of weight $2k+2$ on $\Gamma_1(\operatorname{lcm}(4N,4aN_{\chi}^2))$ if $k>0$ and a quasimodular form of weight two if $k=0$. 
\end{lemma}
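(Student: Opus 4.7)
The plan is to apply Lemma~\ref{lem:holproj}(3) with $F_1=F$ (weight $\kappa_1=\tfrac{3}{2}$) and $f_2=\theta_\chi|V_a$ (weight $\kappa_2=\tfrac{1}{2}$), so that the combined weight is $\kappa_1+\kappa_2+2k=2k+2\geq 2$. Writing $F=F^++F^-$, this produces
\[
\pi_{\operatorname{hol}}^{\operatorname{reg}}\bigl([F,\theta_\chi|V_a]_k\bigr)=[F^+,\theta_\chi|V_a]_k+\pi_{\operatorname{hol}}^{\operatorname{reg}}\bigl([F^-,\theta_\chi|V_a]_k\bigr).
\]
Since $F$ grows at most polynomially toward the cusps, the holomorphic projection is well-defined, and by Lemma~\ref{lem:holproj}(2) the left-hand side is a holomorphic modular form of weight $2k+2$ (quasimodular of weight two if $k=0$). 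The operator $U_4$ preserves (quasi)modularity and changes the level to $\Gamma_1(\operatorname{lcm}(4N,4aN_\chi^2))$. Hence the lemma reduces to showing that
\[
\pi_{\operatorname{hol}}^{\operatorname{reg}}\bigl([F^-,\theta_\chi|V_a]_k\bigr)+2^{3-2k}\pi\binom{2k}{k}\Lambda_{2k+1,a,b}^{\chi,\psi}
\]
is a holomorphic modular form of weight $2k+2$; rearranging then yields the claim.

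The central task is the explicit computation. The hypothesis $\xi_{\frac{3}{2}}(F)=\theta_\psi|V_b=\sum_{s\in\Z}\psi(s)q^{bs^2}$ and the standard inversion between $F^-$ and $\xi_{\frac{3}{2}}(F)$ force
\[
F^-(\tau)=c_F^-(0)v^{-1/2}+\sum_{s\geq 1}Cs\overline{\psi(s)}\,\Gamma\!\bigl(-\tfrac{1}{2},4\pi bs^2v\bigr)q^{-bs^2}
\]
for an explicit universal constant $C$. Expanding the $k$-th Rankin--Cohen bracket $[F^-,\theta_\chi|V_a]_k$ and collecting the $q^n$-coefficient yields a double sum over pairs $(t,s)$ with $at^2-bs^2=n$, weighted by $\chi(t)\overline{\psi(s)}$ and polynomial factors in $t,s$ arising from the $j$-fold derivatives. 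Substituting into
\[
c_F(n)=\frac{(4\pi n)^{2k+1}}{\Gamma(2k+1)}\lim_{\sigma\to 0^+}\int_0^\infty c_{F,v}(n)\,v^{2k-\sigma}e^{-4\pi nv}\,dv
\]
reduces matters to integrals of the form $\int_0^\infty v^\alpha\,\Gamma(-\tfrac{1}{2},4\pi bs^2v)\,e^{-\beta v}\,dv$, which can be evaluated in closed form via the integral representation of the upper incomplete gamma function.

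The main obstacle is the combinatorial identity that assembles the output. After summing the binomial-weighted contributions $\binom{\kappa_1+k-1}{k-j}\binom{\kappa_2+k-1}{j}$ from the Rankin--Cohen bracket against the closed-form integrals, a Vandermonde-type identity must collapse the resulting polynomial in $t,s$ into exactly $(t\sqrt{a}-s\sqrt{b})^{2k+1}$, producing the constant $-2^{3-2k}\pi\binom{2k}{k}$. This is the technical heart of Mertens's proof of~\cite[Theorem 1.2]{Me}; the present statement generalizes his by tracking the level throughout and allowing a general harmonic Maass form $F$ satisfying the hypotheses, which is routine bookkeeping. For $k>0$, cuspidality follows because the only potential $q^0$-contribution to $\pi_{\operatorname{hol}}^{\operatorname{reg}}([F,\theta_\chi|V_a]_k)|U_4$ would come from the $t=0$ summand in $\theta_\chi|V_a$, but $\chi(0)=0$ by convention, so no constant term survives.
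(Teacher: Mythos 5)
Your proposal follows essentially the same route as the paper: apply Lemma~\ref{lem:holproj}(3) to split off $[F^+,\theta_{\chi}|V_a]_k$, use Lemma~\ref{lem:holproj}(2) for (quasi)modularity of $\pi_{\operatorname{hol}}^{\operatorname{reg}}([F,\theta_{\chi}|V_a]_k)$, and defer the explicit evaluation of $\pi_{\operatorname{hol}}^{\operatorname{reg}}([F^-,\theta_{\chi}|V_a]_k)$ as $-2^{3-2k}\pi\binom{2k}{k}\Lambda_{2k+1,a,b}^{\chi,\psi}$ plus a cusp form to Mertens's incomplete-gamma computation, which is exactly how the paper derives the lemma from \cite{Me}. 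The only flawed step is your justification of cuspidality for $k>0$: the convention $\chi(0)=0$ fails for the trivial character (where $\theta_{\chi}$ is the classical theta function with nonzero constant term), and in any case vanishing of the $q^0$-coefficient at $i\infty$ does not give vanishing at the other cusps; the correct reason is that for $k\geq 1$ every term of the Rankin--Cohen bracket carries a derivative of at least one factor, so the constant terms vanish at every cusp and the regularized projection of such a decaying weight-$(2k+2)$ form is cuspidal by the Gross--Zagier argument.
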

\begin{remark}
The statement of Lemma \ref{lem:Mertens} corrects an error in \cite[(5.2)]{Me} where the constant in front of the second term differs by a factor of $-2\sqrt{\pi}$.
\end{remark}

\section{Holomorphic projection and the proof of Theorem \ref{thm:HmMk}}\label{sec:holprojH}
 In this section, we prove Theorem \ref{thm:HmMk}. 
\subsection{Fourier coefficients of certain Rankin--Cohen brackets}

 Define (compare with \cite[(7.7)]{Me}, although the notation is different there) 
\begin{equation*}
G_{k,m,M}(n) := \sum_{\substack{t\in\Z\\ t\equiv m\pmod{M}}}p_{2k}(t,n) H\left(4n-t^2\right),
\end{equation*}
where $p_{2k}(t,n)$ 
 denotes the $(2k)$-th coefficients in the Taylor expansion of $(1-tX+nX^2)^{-1}$. 
These appear in the Fourier expansion of $[\mathcal{H}, \theta_{m,M}]_{k} |U_4$ as a direct calculation using the results of Cohen \cite{Cohen} shows. 
\begin{lemma}\label{lem:GcoeffRankinCohen}
The $n$-th Fourier coefficient of $[\mathcal{H}, \theta_{m,M}]_{k} |U_4$ equals \(\frac{(2k)!}{2\cdot k!} G_{k,m,M}(n)\). 
\end{lemma}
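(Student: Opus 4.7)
My plan is a direct Fourier-coefficient computation. I would start by plugging in the Fourier expansions of the two factors—$\mathcal{H}(\tau) = \sum_{n} H(n) q^n$ with weight $\kappa_1 = 3/2$, and $\theta_{m,M}(\tau) = \theta_{0,m,M}(\tau) = \sum_{t \equiv m \pmod M} q^{t^2}$ with weight $\kappa_2 = 1/2$—into the definition of the Rankin--Cohen bracket. Because $\tfrac{1}{2\pi i}\partial_\tau$ multiplies $q^n$ by $n$, the factor $(2\pi i)^{-k}$ cancels against the derivatives; hence
\[
[\mathcal{H}, \theta_{m,M}]_k(\tau) \;=\; \sum_{n \geq 0}\sum_{t \equiv m \pmod M} H(n)\, P_k(n, t^2)\, q^{n + t^2},
\]
where $P_k(X, Y) := \sum_{j=0}^k (-1)^j \binom{k + 1/2}{k-j}\binom{k - 1/2}{j}\, X^j Y^{k-j}$. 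Applying $U_4$ and collecting terms with $n + t^2 = 4N$, the $N$-th Fourier coefficient becomes $\sum_{t \equiv m \pmod M} P_k(4N - t^2, t^2)\, H(4N - t^2)$.

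The remaining task is to verify the polynomial identity
\[
P_k\bigl(4N - t^2,\, t^2\bigr) \;=\; \tfrac{(2k)!}{2 \cdot k!}\, p_{2k}(t, N) \qquad \text{in } \Z[t, N].
\]
To prove it, I would exploit the closed form for $p_{2k}$ coming from the factorization $1 - tX + NX^2 = (1 - \alpha X)(1 - \beta X)$ with $\alpha + \beta = t$ and $\alpha\beta = N$; this gives $p_{2k}(t, N) = (\alpha^{2k+1} - \beta^{2k+1})/(\alpha - \beta)$, equivalently $p_{2k}(t, N) = N^k\, U_{2k}\bigl(t/(2\sqrt{N})\bigr)$ where $U_{2k}$ is the Chebyshev polynomial of the second kind. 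Parameterizing $t = 2\sqrt{N}\cos\theta$, so that $4N - t^2 = 4N\sin^2\theta$ and $p_{2k}(t, N) = N^k \sin((2k+1)\theta)/\sin\theta$, both sides become trigonometric polynomials in $\theta$, and the identity reduces to a classical formula expressing $\sin((2k+1)\theta)/\sin\theta$ as a weighted sum of $\sin^{2j}\theta\cos^{2(k-j)}\theta$ with the half-integral binomial weights from $P_k$. The normalizing constant on the right-hand side emerges from the Legendre duplication identity applied to $\binom{k \pm 1/2}{j}$.

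The main obstacle is this combinatorial/trigonometric identity; attempting to verify it head-on by expanding both sides is messy. The cleanest route is probably to recognize $P_k$ as (essentially) the polynomial appearing in Cohen's construction of half-integral weight Eisenstein series \cite{Cohen}, in which case the required identity is implicit in his formulas (which is what the statement alludes to by ``a direct calculation using the results of Cohen''). An alternative is induction on $k$, using the Chebyshev-type recursion $p_{2k}(t, N) = t\, p_{2k-1}(t, N) - N\, p_{2k-2}(t, N)$ together with Pascal-type identities for the half-integral binomials. Once the polynomial identity is established, the lemma follows immediately from the first paragraph.
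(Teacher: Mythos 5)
Your reduction in the first paragraph is exactly the ``direct calculation'' the paper has in mind (the paper offers no proof beyond a pointer to Cohen), and your formula for the $N$-th coefficient of $[\mathcal{H},\theta_{m,M}]_k|U_4$ as $\sum_{t\equiv m\pmod{M}}P_k(4N-t^2,t^2)H(4N-t^2)$ is correct. The gap is in the second step: the polynomial identity you propose to verify is \emph{false} under the normalization of $p_{2k}$ you commit to, and since that identity is the entire content of the lemma, the argument cannot close as written. Concretely, for $k=1$ one has $[\mathcal{H},\theta_{m,M}]_1=\tfrac{1}{2\pi i}\bigl(\tfrac{3}{2}\mathcal{H}\theta_{m,M}'-\tfrac{1}{2}\mathcal{H}'\theta_{m,M}\bigr)$, so $P_1(4N-t^2,t^2)=\tfrac{3}{2}t^2-\tfrac{1}{2}(4N-t^2)=2(t^2-N)$, whereas $\tfrac{(2k)!}{2\cdot k!}p_2(t,N)=1\cdot N\,U_2\bigl(t/(2\sqrt{N})\bigr)=t^2-N$ under your convention $p_{2k}(t,N)=N^kU_{2k}\bigl(t/(2\sqrt{N})\bigr)$; the two sides differ by a factor of $2$ (the same happens at $k=0$, where the left side is $1$ and the right side is $\tfrac12$). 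What is true, and what your Chebyshev/Gegenbauer argument can actually deliver, is that $P_k(4N-t^2,t^2)$ is proportional to $N^kU_{2k}\bigl(t/(2\sqrt{N})\bigr)$, with constant $\sum_{j=0}^{k}\binom{k+1/2}{k-j}\binom{k-1/2}{j}=\binom{2k}{k}$ by Vandermonde (compare the coefficients of $t^{2k}$ on both sides).

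The source of the mismatch is that the paper carries two mutually inconsistent normalizations of $p_{2k}$: the verbal definition (``the $(2k)$-th Taylor coefficient of $(1-tX+nX^2)^{-1}$'', which is your $N^kU_{2k}(t/(2\sqrt{N}))$) and the displayed ``explicit evaluation'', which is $\tfrac{(2k)!}{k!}$ times that; the recursion in Lemma \ref{lem:HrelateG} forces the latter. You must fix one convention and recompute the constant accordingly: with the Taylor-coefficient normalization the coefficient is $\binom{2k}{k}\sum_t p_{2k}(t,N)H(4N-t^2)$, while with the displayed formula it is $\tfrac{1}{k!}\,G_{k,m,M}(N)$ --- neither matches $\tfrac{(2k)!}{2\cdot k!}$ for all $k$, so the discrepancy is not something your verification can paper over. (None of this damages the downstream results, since the constant only enters Proposition \ref{prop:HmMkbound} through the bound $G_{k,m,M}(n)\ll_{k,M,\varepsilon}n^{k+\frac12+\varepsilon}$, where it is absorbed.) Your proposed route to the proportionality itself --- writing $p_{2k}(t,N)=N^k\sin((2k+1)\theta)/\sin\theta$ with $t=2\sqrt{N}\cos\theta$, or inducting via $c_{2k}=t\,c_{2k-1}-N\,c_{2k-2}$ against Pascal-type identities for the half-integral binomials --- is sound and is the right way to finish; you just need to carry it out and land on the correct constant rather than asserting the one in the statement.
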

Using the explicit evaluation (\cite[(3), page 29]{EichlerZagier})
\begin{equation*}
p_{2k}(t,n)= \frac{(2k)!}{k!}\sum_{\mu=0}^k (-1)^\mu  \binom{2k-\mu}{\mu}
t^{2 k-2 \mu}n^\mu
\end{equation*}
we directly obtain the following lemma.
\begin{lemma}\label{lem:HrelateG}
For $m\in\Z$ and $k,M\in\N$, we have 
\begin{equation*}
H_{2k,m,M}(n) = \frac{k!}{(2k)!} G_{k,m,M}(n) - \sum_{\mu=1}^{k} (-1)^\mu \frac{(2k-\mu)!}{\mu!(2k-2\mu)!} n^\mu H_{2k-2\mu,m,M}(n).
\end{equation*}
\end{lemma}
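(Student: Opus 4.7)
The lemma is essentially a rearrangement identity, so the plan is purely algebraic. I would start by substituting the explicit evaluation
\[
p_{2k}(t,n)= \frac{(2k)!}{k!}\sum_{\mu=0}^{k} (-1)^\mu \binom{2k-\mu}{\mu} t^{2k-2\mu}n^\mu
\]
into the definition
\[
G_{k,m,M}(n) = \sum_{t\equiv m\pmod{M}} p_{2k}(t,n) H\!\left(4n-t^2\right).
\]
Since the sum over $\mu$ is finite and the Hurwitz class numbers $H(4n-t^2)$ vanish unless $|t|\leq 2\sqrt{n}$ (so the sum over $t$ is effectively finite), I can interchange the two summations without any convergence concern.

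Next I would group the terms according to the power of $n$: for each fixed $\mu$, the inner sum over $t\equiv m\pmod{M}$ of $t^{2k-2\mu} H(4n-t^2)$ is, by definition \eqref{eqn:HmMkdef}, equal to $H_{2k-2\mu,m,M}(n)$. This gives the compact identity
\[
G_{k,m,M}(n) = \frac{(2k)!}{k!}\sum_{\mu=0}^{k} (-1)^\mu \binom{2k-\mu}{\mu} n^\mu H_{2k-2\mu,m,M}(n).
\]
The $\mu=0$ term is $\frac{(2k)!}{k!}H_{2k,m,M}(n)$ (using $\binom{2k}{0}=1$), and I would isolate it on the left-hand side. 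Dividing by $\frac{(2k)!}{k!}$ and rewriting the binomial coefficient as $\binom{2k-\mu}{\mu}=\frac{(2k-\mu)!}{\mu!(2k-2\mu)!}$ yields exactly the stated formula, after noting that $\frac{k!}{(2k)!}\cdot\frac{(2k)!}{k!}\binom{2k-\mu}{\mu} = \binom{2k-\mu}{\mu}$.

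There is no real obstacle here; the only things to verify are bookkeeping of signs, the clean extraction of the $\mu=0$ term, and that the interchange of summations is legitimate (which it is, by the finiteness of both sums thanks to $H(D)=0$ for $D<0$). Thus the proof is essentially a one-line substitution followed by solving a linear relation for $H_{2k,m,M}(n)$.
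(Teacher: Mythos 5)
Your proposal is correct and is precisely the ``direct'' computation the paper has in mind: it gives no written proof, simply substituting the explicit formula for $p_{2k}(t,n)$ into the definition of $G_{k,m,M}(n)$, recognizing the inner sums as $H_{2k-2\mu,m,M}(n)$, and isolating the $\mu=0$ term. Your bookkeeping of the constant $\frac{(2k)!}{k!}$ and of the binomial coefficient $\binom{2k-\mu}{\mu}=\frac{(2k-\mu)!}{\mu!(2k-2\mu)!}$ checks out, so there is nothing to add.
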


In order to investigate $H_{2k,m,M}(n)$, it suffices by Lemma \ref{lem:GcoeffRankinCohen} and Lemma \ref{lem:HrelateG}  to study the coefficients of $[\mathcal{H},\theta_{m,M}]_k|U_4$. In \cite[Theorem 1.2]{Me} Mertens applied holomorphic projection on functions related to $[\widehat{\mathcal{H}},\theta_{m,M}]_k|U_4$. To state his result (noted two paragraphs before \cite[Proposition 7.2]{Me}), we set 
\begin{align*}
\Lambda_{\ell,m,M}(\tau) &:= \sum_{n=1}^{\infty} \lambda_{\ell,m,M}(n) q^n,\qquad \text{where} \qquad
\lambda_{\ell,m,M}(n):=\sum_{\pm} \sideset{}{^*}\sum_{\substack{t>s\geq 0\\ t^2-s^2=n \\ t\equiv \pm m\pmod{M}}}(t-s)^{\ell}.
\end{align*}
 \begin{lemma}\label{lem:Mertensholproj}
For $k\in\N_0$, $m\in\Z$, and $M\in\N$, the function 
\begin{equation*}
\left(\left[\mathcal{H},\theta_{m,M}\right]_k +2^{-1-2k}\binom{2k}{k}\Lambda_{2k+1,m,M}\right)\bigg|U_4
\end{equation*}
is a holomorphic cusp form of weight $2+2k$ on $\Gamma_0(4M^2)\cap\Gamma_1(M)$ (resp. $\Gamma_0(4M^2)$) if $M\nmid m$ (resp. $M\mid m$) if $k>0$ and quasimodular on that group if $k=0$.
\end{lemma}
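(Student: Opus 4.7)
The strategy is to reduce to Lemma \ref{lem:Mertens} by decomposing $\theta_{m,M}$ into the Serre--Stark basis theta functions $\theta_\chi\big|V_a$. First, set $d:=\gcd(m,M)$, $m':=m/d$, $M':=M/d$, so $\gcd(m',M')=1$ and $\theta_{m,M}=\theta_{m',M'}\big|V_{d^2}$. Character orthogonality modulo $M'$ (together with $\theta_\chi\equiv 0$ for odd $\chi$) then yields the decomposition
\[
\theta_{m',M'}=\frac{1}{\phi(M')}\sum_{\substack{\chi\bmod M'\\ \chi(-1)=1}}\overline{\chi(m')}\theta_\chi.
\]

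A direct calculation from Theorem \ref{thm:Hcomplete} gives $\xi_{3/2}(\widehat{\mathcal H})=-\tfrac{1}{16\pi}\theta$, where $\theta=\theta_{\mathbf 1}\big|V_1$ with $\mathbf 1$ the trivial character modulo $1$. Hence $F:=-16\pi\widehat{\mathcal H}$ fulfils the hypothesis of Lemma \ref{lem:Mertens} with $\psi=\mathbf 1$, $b=N_\psi=1$, and $N=1$ (since $\widehat{\mathcal H}$ lives on $\Gamma_0(4)\supseteq\Gamma_1(4)$). Applying Lemma \ref{lem:Mertens} to $F$ and $\theta_\chi\big|V_{d^2}$ for each even $\chi\bmod M'$ (so $a=d^2$ and $N_\chi\mid M'$) and dividing the conclusion by $-16\pi$, we obtain that
\[
\left([\mathcal H,\theta_\chi\big|V_{d^2}]_k+2^{-1-2k}\tbinom{2k}{k}\Lambda^{\chi,\mathbf 1}_{2k+1,d^2,1}\right)\bigg|U_4
\]
is a cusp form (or a quasimodular form if $k=0$) of weight $2+2k$ on a subgroup of $\Gamma_1(4M^2)$ carrying a Nebentypus character $\chi$ on $\Gamma_0(4M^2)$.

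Next, take the weighted sum of these identities against $\overline{\chi(m')}/\phi(M')$. The Rankin--Cohen term reassembles to $[\mathcal H,\theta_{m,M}]_k$. For the $\Lambda$-term, combining the orthogonality
\[
\frac{1}{\phi(M')}\sum_{\substack{\chi\bmod M'\\ \chi(-1)=1}}\overline{\chi(m')}\chi(t)=\tfrac12\bigl(\delta_{t\equiv m'\pmod{M'}}+\delta_{t\equiv -m'\pmod{M'}}\bigr)\qquad(\gcd(t,M')=1)
\]
with the factor of $2$ in the definition $\Lambda^{\chi,\mathbf 1}_{\ell,d^2,1}=2\sum_n\lambda^{\chi,\mathbf 1}_{\ell,d^2,1}(n)q^n$ and the substitution $t'=dt$ (which translates $d^2t^2-s^2=n$, $t\equiv\pm m'\pmod{M'}$ into $(t')^2-s^2=n$, $t'\equiv\pm m\pmod M$) produces exactly the $\sum_{\pm}$ appearing in $\lambda_{2k+1,m,M}$. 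Hence the weighted sum of $\Lambda^{\chi,\mathbf 1}_{2k+1,d^2,1}$ equals $\Lambda_{2k+1,m,M}$.

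To identify the modularity group, observe that each summand is modular on $\Gamma_0(4M^2)$ with Nebentypus $\chi$, so the weighted average becomes $\Gamma_1(M)$-invariant because any $\gamma\in\Gamma_1(M)$ has lower-right entry $d\equiv 1\pmod M$, forcing $\chi(d)=1$ for every $\chi$ modulo $M'\mid M$. This gives modularity on $\Gamma_0(4M^2)\cap\Gamma_1(M)$, which collapses to $\Gamma_0(4M^2)$ when $M\mid m$ (then $M'=1$ and only the trivial character survives). The main technical obstacle is the careful bookkeeping of three numerical factors---the $-\tfrac{1}{16\pi}$ from $\xi_{3/2}(\widehat{\mathcal H})$, the factor of $2$ built into $\Lambda^{\chi,\psi}$, and the $\tfrac12$ from the even-character projection---alongside the tracking of Nebentypus characters through the holomorphic projection, so that all constants align to yield the clean statement on the correct group.
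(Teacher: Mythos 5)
Your proof is correct and takes essentially the same route as the paper's, whose entire argument is a one-line appeal to Lemma \ref{lem:Mertens} together with orthogonality of characters to pass between $\theta_{\chi}|V_a$, $\Lambda^{\chi,\psi}_{\ell,a,b}$ and $\theta_{m,M}$, $\Lambda_{\ell,m,M}$. You have simply supplied the details that the paper leaves implicit --- the reduction via $d=\gcd(m,M)$, the normalization $\xi_{3/2}\bigl(\widehat{\mathcal{H}}\bigr)=-\tfrac{1}{16\pi}\theta$, and the bookkeeping of the constants $2$, $\tfrac12$, and $-16\pi$ --- and these all check out.
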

\begin{remark}
Lemma \ref{lem:Mertensholproj} was stated in a different form in \cite{Me}.  Specifically, Mertens assumed that $M$ is prime and an error appears in the constant in front of $\Lambda_{2k+1,m,M}$ in \cite{Me}. The proof in \cite{Me} goes through without the assumption that $M$ is prime, however.
\end{remark}
\begin{proof}[Proof of Lemma \ref{lem:Mertensholproj}]
Using Lemma \ref{lem:Mertens} and relating the functions $\Lambda_{\ell,a,b}^{\chi,\psi}$ (resp. $[\mathcal{H},\theta_{\chi}|V_{a}]_{k}$) to $\Lambda_{\ell,m,M}$ (resp. $[\mathcal{H},\theta_{m,M}]_{k}$) via orthogonality of characters yields the claim.
\end{proof}

In light of Lemma \ref{lem:HrelateG}, it is natural to recursively define $\mathscr{C}_0:=1$ and 
\[
\mathscr{C}_k:=-\sum_{\mu=1}^{k} (-1)^\mu \frac{(2k-\mu)!}{\mu!(2k-2\mu)!} \mathscr{C}_{k-\mu}.
\]
As we show in the next lemma, $\mathscr{C}_k$ are the \begin{it}Catalan numbers\end{it} 
\[
C_{k}:=\frac{1}{k+1} \binom{2k}{k}.
\]
\begin{lemma}\label{lem:Catalan}
We have $\mathscr{C}_k=C_k$.
\end{lemma}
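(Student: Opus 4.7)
The plan is to prove $\mathscr{C}_k = C_k$ by strong induction on $k$. The base case $\mathscr{C}_0 = 1 = C_0$ holds by definition. Assuming $\mathscr{C}_j = C_j$ for all $j < k$, the defining recurrence
\[
\mathscr{C}_k = -\sum_{\mu=1}^{k}(-1)^\mu \binom{2k-\mu}{\mu}\mathscr{C}_{k-\mu}
\]
reduces the induction step to verifying the identity
\[
\sum_{\mu=0}^{k}(-1)^\mu \binom{2k-\mu}{\mu}C_{k-\mu} = 0 \qquad (k \geq 1),
\]
in which the $\mu=0$ term is exactly $C_k$. Reindexing by $j = k-\mu$ and using $\binom{k+j}{k-j} = \binom{k+j}{2j}$, this is equivalent (after multiplying through by $(-1)^k$) to the symmetric identity
\[
T_k := \sum_{j=0}^{k}(-1)^j\binom{k+j}{2j}C_j = 0 \qquad (k \geq 1),
\]
with $T_0 = 1$.

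I would prove this last identity by a generating function computation. Let $\Phi(x) := \sum_{k\geq 0} T_k x^k$. Swapping the order of summation and invoking the elementary closed form
\[
\sum_{k\geq j}\binom{k+j}{2j}x^k = \frac{x^j}{(1-x)^{2j+1}},
\]
one obtains
\[
\Phi(x) = \frac{1}{1-x}\sum_{j\geq 0}C_j\left(\frac{-x}{(1-x)^2}\right)^{\!j} = \frac{1}{1-x}\,C\!\left(\frac{-x}{(1-x)^2}\right),
\]
where $C(y) := \sum_{j\geq 0} C_j y^j = \frac{1-\sqrt{1-4y}}{2y}$ is the Catalan generating function. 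The crux is the algebraic simplification
\[
1 - 4\cdot\frac{-x}{(1-x)^2} = \frac{(1-x)^2 + 4x}{(1-x)^2} = \frac{(1+x)^2}{(1-x)^2},
\]
whence $\sqrt{1-4y} = \frac{1+x}{1-x}$ on the principal branch at $x=0$. A short calculation then gives $C\!\left(\tfrac{-x}{(1-x)^2}\right) = 1-x$, so $\Phi(x) \equiv 1$. Reading off coefficients yields $T_0 = 1$ and $T_k = 0$ for $k\geq 1$, which completes the induction.

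I expect the only subtleties to be bookkeeping: tracking the signs through the reindexing $j = k-\mu$ and spotting the perfect-square simplification $(1-x)^2 + 4x = (1+x)^2$ that makes $C$ rational. Once these are in place there is no serious content beyond the explicit form of the Catalan generating function, so the lemma is essentially a formal identity.
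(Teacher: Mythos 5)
Your proof is correct and follows essentially the same route as the paper: after the same inductive reduction, the paper also verifies the convolution identity by a generating-function computation, summing $\sum_{\mu}\binom{2j+\mu}{\mu}(-X)^{\mu}$ to get the argument $X/(1+X)^2$ inside the Catalan generating function $F(Z)=\frac{1-\sqrt{1-4Z}}{2Z}$ and using the same perfect-square simplification to find $F\left(\frac{X}{(1+X)^2}\right)=1+X$. Your version is the substitution $x\mapsto -X$ of theirs, so the difference is purely cosmetic.
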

\begin{proof}
For $k=0$ the claim holds directly by definition. Assume inductively that the claim holds for all $\ell<k$. Then we have 
\[
\mathscr{C}_{k}=-\sum_{\mu=1}^{k} (-1)^\mu \binom{2k-\mu}{\mu}C_{k-\mu}.
\]
It remains to show that 
\begin{equation}\label{eqn:CatalanRecursive}
-\sum_{\mu=1}^{k} (-1)^\mu \binom{2k-\mu}{\mu}C_{k-\mu}=\begin{cases} C_k&\text{if }k\geq 1,\\ 0&\text{if }k=0.\end{cases}
\end{equation}
To show the claim, we take the generating function  of the left-hand side of \eqref{eqn:CatalanRecursive}: 
\begin{align}
L(X):&=-\sum_{k = 1}^\infty \sum_{\mu=1}^k (-1)^{\mu} \binom{2k-\mu}{\mu} C_{k-\mu}X^{k}
\label{eqn:almostCatalan} \\
&=-\frac{1}{1+X} \sum_{k = 0}^\infty C_k \left(\frac{X}{(1+X)^2}\right)^k+\sum_{k=0}^\infty C_k X^{k}, \nonumber
\end{align}
using the binomial series expansion. We then recall the evaluation of the generating function (see \cite[26.5.2]{NIST})
\[
F(Z):=\sum_{k = 0}^\infty C_k Z^k= \frac{ 1- \sqrt{ 1-4Z}}{2Z},
\]
valid for $|Z|<\frac{1}{4}$. Therefore, for $0<X<\frac{1}{4}$, we obtain
\begin{align*}
F\left(\frac{X}{(1+X)^2}\right)=1+X.
\end{align*}
Plugging this into the first sum in \eqref{eqn:almostCatalan} yields $L(X)=\sum_{k = 1}^\infty C_k X^{k}$. This gives the claim.
\end{proof}

\subsection{Asymptotic growth of the moments}
In order to obtain the asymptotic growth of $G_{k,m,M}$ and $H_{2k,m,M}$, we require the following straightforward estimates.
\begin{lemma}\label{lem:lambdabnd}
We have
\[
\lambda_{\ell,m,M}(n)\leq n^{\frac{\ell}{2}}\lambda_{m,M}(n)\ll_{\varepsilon} n^{\frac{\ell}{2}+\varepsilon}.
\]
\end{lemma}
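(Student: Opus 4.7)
The plan is to separate the two inequalities and treat each by a direct elementary estimate.

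For the first inequality, I would argue pointwise on each term of the defining sum. Fix $(t,s)$ with $t>s\geq 0$ and $t^2-s^2=n$. Factoring gives $(t-s)(t+s)=n$, and since $s\geq 0$ we have $t+s\geq t-s>0$, so
\[
(t-s)^2 \leq (t-s)(t+s) = n,
\]
hence $t-s\leq \sqrt{n}$ and $(t-s)^{\ell}\leq n^{\ell/2}$. Summing over the same index set that defines $\lambda_{m,M}(n)=\lambda_{0,m,M}(n)$ and recording the two signs in $\sum_{\pm}$ then yields
\[
\lambda_{\ell,m,M}(n)\leq n^{\frac{\ell}{2}}\lambda_{m,M}(n).
\]

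For the second inequality, the key observation is that each pair $(t,s)$ contributing to $\lambda_{m,M}(n)$ is uniquely determined by the divisor $d:=t-s$ of $n$: the complementary divisor is $t+s=n/d$, from which $t=\tfrac{1}{2}(d+n/d)$ and $s=\tfrac{1}{2}(n/d-d)$ are recovered. Thus the number of pairs is bounded by the number of divisors of $n$, and the arithmetic progression condition together with the $\pm$ sign and the starred weighting only contributes a bounded multiplicative factor. The standard divisor bound $d(n)\ll_{\varepsilon}n^{\varepsilon}$ therefore gives $\lambda_{m,M}(n)\ll_{\varepsilon}n^{\varepsilon}$, and combining this with the pointwise estimate above yields the stated $n^{\ell/2+\varepsilon}$ bound.

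There is no serious obstacle here; the entire argument is a straightforward factorization $n=(t-s)(t+s)$ combined with the divisor bound. The only minor care needed is to verify that the weighting conventions (the $\sum_{\pm}$ and the factor $\tfrac{1}{2}$ in the starred sum when $s=0$) do not interfere with the inequalities, but they clearly only produce harmless bounded constants.
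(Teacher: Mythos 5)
Your proof is correct; the paper states this lemma without proof (calling it a ``straightforward estimate''), and your argument --- the pointwise bound $(t-s)^2\leq (t-s)(t+s)=n$ for the first inequality, then the divisor bound applied to the factorization $n=(t-s)(t+s)$ for the second --- is exactly the intended one. The handling of the $\pm$ sum, the congruence restriction, and the starred weighting is also right: each only shrinks the count or contributes a factor of at most $2$.
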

We are now ready to prove an asymptotic formula for $H_{2k,m,M}(n)$.
\begin{proposition}\label{prop:HmMkbound}
We have
\[
H_{2k,m,M}(n)=C_k n^{k}H_{m,M}(n) + O_{k,M,\varepsilon}\left(n^{k+\frac{1}{2}+\varepsilon}\right).
\]
\end{proposition}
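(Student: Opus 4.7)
The strategy is to combine Lemma \ref{lem:HrelateG}, which expresses $H_{2k,m,M}(n)$ as a linear combination of $G_{k,m,M}(n)$ and lower-order moments $n^{\mu}H_{2k-2\mu,m,M}(n)$, with an analytic bound on $G_{k,m,M}(n)$ coming from Lemma \ref{lem:Mertensholproj}, and then induct on $k$. The base case $k=0$ is immediate since $H_{0,m,M}(n)=H_{m,M}(n)$ and $C_0=1$. For the inductive step I isolate the main term of size $n^k H_{m,M}(n)$, recognize the resulting coefficient as the recursion defining $\mathscr{C}_k$, and then invoke Lemma \ref{lem:Catalan} to identify it with $C_k$. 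Everything not of this form must be pushed into the error $O(n^{k+\frac12+\varepsilon})$.

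\textbf{Step 1: Bounding $G_{k,m,M}(n)$.} For $k\ge 1$, Lemma \ref{lem:Mertensholproj} tells us that
\[
f_k := \Bigl([\mathcal{H},\theta_{m,M}]_k + 2^{-1-2k}\tbinom{2k}{k}\Lambda_{2k+1,m,M}\Bigr)\big|U_4
\]
is a holomorphic cusp form of weight $2+2k$ on $\Gamma_0(4M^2)\cap\Gamma_1(M)$. Decomposing $f_k$ into newforms and applying Deligne's bound, the $n$-th Fourier coefficient of $f_k$ is $O_{k,M,\varepsilon}(n^{k+\frac12+\varepsilon})$. By Lemma \ref{lem:GcoeffRankinCohen} the $n$-th coefficient of $[\mathcal{H},\theta_{m,M}]_k|U_4$ equals $\tfrac{(2k)!}{2\cdot k!}G_{k,m,M}(n)$, while Lemma \ref{lem:lambdabnd} gives that the $n$-th coefficient of $\Lambda_{2k+1,m,M}|U_4$ is $O_{k,M,\varepsilon}(n^{k+\frac12+\varepsilon})$. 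Solving yields
\[
G_{k,m,M}(n) = O_{k,M,\varepsilon}\!\left(n^{k+\frac{1}{2}+\varepsilon}\right).
\]

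\textbf{Step 2: Inductive step.} Assume the proposition for all indices strictly less than $k$. Substituting into Lemma \ref{lem:HrelateG},
\[
H_{2k,m,M}(n) = \tfrac{k!}{(2k)!}G_{k,m,M}(n) - \sum_{\mu=1}^{k}(-1)^{\mu}\tfrac{(2k-\mu)!}{\mu!(2k-2\mu)!}\,n^{\mu}H_{2k-2\mu,m,M}(n).
\]
The first term is absorbed by Step 1. For each $\mu\ge 1$, the inductive hypothesis gives
\[
n^{\mu}H_{2k-2\mu,m,M}(n) = C_{k-\mu}\,n^{k}H_{m,M}(n) + O_{k,M,\varepsilon}\!\left(n^{k+\frac12+\varepsilon}\right),
\]
where we use the trivial bound $H_{m,M}(n)\ll n^{1+\varepsilon}$ (from Lemma \ref{lem:Siegel}) only implicitly since $H_{m,M}$ is already the factor extracted. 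The main-term coefficient is
\[
-\sum_{\mu=1}^{k}(-1)^{\mu}\tfrac{(2k-\mu)!}{\mu!(2k-2\mu)!}C_{k-\mu} = \mathscr{C}_k = C_k
\]
by definition of $\mathscr{C}_k$ and Lemma \ref{lem:Catalan}, completing the induction.

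\textbf{Main obstacle.} The non-routine ingredient is recognizing that the recursion produced by Lemma \ref{lem:HrelateG} is precisely the Catalan recurrence of Lemma \ref{lem:Catalan}; without this combinatorial identification, the argument would deliver an unspecified constant rather than $C_k$. The analytic input, namely that the holomorphic projection of $[\mathcal{H},\theta_{m,M}]_k$ differs from an explicit binary-form theta series by a genuine cusp form (whose coefficients are controlled by Deligne), has already been packaged into Lemma \ref{lem:Mertensholproj}, so the remaining work is essentially bookkeeping to confirm that the two error contributions $G_{k,m,M}(n)$ and $n^\mu \cdot O(n^{k-\mu+\frac12+\varepsilon})$ both fit within $O(n^{k+\frac12+\varepsilon})$.
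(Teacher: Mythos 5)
Your proof is correct and follows essentially the same route as the paper: bound $G_{k,m,M}(n)$ via Lemma \ref{lem:GcoeffRankinCohen}, Lemma \ref{lem:Mertensholproj}, Deligne's bound, and Lemma \ref{lem:lambdabnd}, then induct through Lemma \ref{lem:HrelateG} and identify the resulting coefficient as $C_k$ via Lemma \ref{lem:Catalan}. The only cosmetic difference is that you mention decomposing into newforms before applying Deligne, which the paper leaves implicit.
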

\begin{proof}
We argue by induction using Lemma \ref{lem:HrelateG}.  Since $C_0=1$, the claim holds trivially for $k=0$.  For $k\geq1$, Lemma \ref{lem:GcoeffRankinCohen} and Lemma \ref{lem:Mertensholproj} imply that
\[
G_{k,m,M}(n)+\frac{1}{2^{2k}\cdot k!}\lambda_{2k+1,m,M}(4n)
\]
is the $n$-th coefficient of a weight $2k+2$ cusp form. By Deligne's bound \cite{Deligne} it thus may be bound against $O_{k,M,\varepsilon}(n^{k+\frac{1}{2}+\varepsilon})$. 
The implied constant in the error term a priori depends on $m$ as well,  but by taking the maximum over all of the choices of $m\pmod{M}$, we may drop the depencence on $m$ throughout. Using Lemma \ref{lem:lambdabnd}, we obtain
\begin{equation*}
G_{k,m,M}(n)\ll_{k,M,\varepsilon}  n^{k+\frac{1}{2}+\varepsilon}.
\end{equation*}
Plugging this into Lemma \ref{lem:HrelateG}, using the inductive hypothesis 
and the fact that $C_k$ satisfies the recurrence defining $\mathscr{C}_k$ 
by Lemma \ref{lem:Catalan}, we obtain the claim.
\end{proof}

\subsection{The main term}
In this subsection we investigate the growth of $H_{m,M}(n)$. 
\begin{lemma}\label{lem:HmM0lower}
For $m,M\in\N$ fixed, we have, as $n\to\infty$, 
\[
n^{1-\varepsilon}\ll_{\varepsilon,M} H_{m,M}(n)\ll_{\varepsilon} n^{1+\varepsilon}.
\]
\end{lemma}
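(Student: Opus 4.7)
The plan is to deduce both bounds directly from the termwise Siegel estimates in Lemma~\ref{lem:Siegel} combined with a count of the summands in the defining sum for $H_{m,M}(n)$.

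For the upper bound I will use that $H(4n-t^2)=0$ unless $|t|\leq 2\sqrt{n}$, which restricts the sum to at most $O(\sqrt{n})$ non-zero terms. Each such term satisfies $H(4n-t^2)\ll_\varepsilon (4n-t^2)^{1/2+\varepsilon}\leq (4n)^{1/2+\varepsilon}$ by the upper half of Lemma~\ref{lem:Siegel}, so summing over the $O(\sqrt{n})$ terms immediately yields $H_{m,M}(n)\ll_\varepsilon n^{1+\varepsilon}$.

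For the lower bound, which is the more delicate direction, I would restrict the sum to $t\equiv m\pmod{M}$ with $|t|\leq \sqrt{n}$. For such $t$ one has $4n-t^2\geq 3n$; moreover $4n-t^2\equiv -t^2\in\{0,3\}\pmod 4$, so $4n-t^2$ is a valid value of $|D|$ for a negative discriminant $D$ and Siegel's lower bound applies, yielding $H(4n-t^2)\gg_\varepsilon n^{1/2-\varepsilon}$. The only potentially problematic contribution $H(0)=-\tfrac{1}{12}$ occurs when $t=\pm 2\sqrt{n}$, which lies outside the restricted range, so all terms in the restricted sum are non-negative and Siegel's bound genuinely gives a lower bound on $H_{m,M}(n)$. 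For $n$ sufficiently large relative to $M$, the number of $t\equiv m\pmod{M}$ with $|t|\leq\sqrt{n}$ is $\gg \sqrt{n}/M$, and combining these estimates yields $H_{m,M}(n)\gg_{\varepsilon,M} \sqrt{n}\cdot n^{1/2-\varepsilon}=n^{1-\varepsilon}$.

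The only real obstacle is that Siegel's lower bound for $H(|D|)$ is ineffective; this propagates to an ineffective implied constant in the lower bound, consistent with the note in the remarks following Theorem~\ref{thm:SmMk}. Everything else reduces to counting lattice points in an arithmetic progression inside a short interval, which is routine.
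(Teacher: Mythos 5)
Your proof is correct and follows essentially the same route as the paper's: Siegel's bounds applied termwise, with the sum restricted to $|t|\le\sqrt{n}$ for the lower bound and extended over all $|t|<2\sqrt{n}$ for the upper bound. The only cosmetic difference is that the paper explicitly subtracts the constant $\tfrac{1}{6}$ to account for the possible $H(0)=-\tfrac{1}{12}$ terms at $t=\pm 2\sqrt{n}$ before passing to the restricted sum (dropping those two possibly negative terms means the restricted sum is a lower bound only up to this bounded constant), whereas you note the issue but absorb it implicitly; this does not affect the asymptotic.
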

\begin{proof}
We begin with the lower bound.  Since for $4n-t^2\neq 0$, $H(4n-t^2)\geq 0$ and since $H(0)=-\frac{1}{12}$, we obtain
\begin{equation}\label{eqn:HmMpartial}
H_{m,M}(n)\geq \sum_{\substack{t\equiv m\pmod{M}\\ |t|\leq \sqrt{n}}} H\left(4n-t^2\right)-\frac{1}{6}.
\end{equation}
Using the lower bound in Lemma \ref{lem:Siegel}, we obtain 
\[
\sum_{\substack{t\equiv m\pmod{M}\\ t\leq \sqrt{n}}} H\left(4n-t^2\right)\gg_{\varepsilon}\sum_{\substack{t\equiv m\pmod{M}\\ t\leq \sqrt{n}}} \left(4n-t^2\right)^{\frac{1}{2}-\varepsilon}\gg_{\varepsilon,M} n^{1-\varepsilon}.
\]
Plugging this into \eqref{eqn:HmMpartial} yields the lower bound.  

For the upper bound, we again use the fact that every term is non-negative except $4n-t^2=0$ to bound 
\[
H_{m,M}(n)\leq \sum_{|t|< 2\sqrt{n}} H\left(4n-t^2\right).
\]
The upper bound in Lemma \ref{lem:Siegel} then yields 
\[
\sum_{|t|< 2\sqrt{n}} H\left(4n-t^2\right)\ll_{\varepsilon}\sum_{|t|< 2\sqrt{n}} \left(4n-t^2\right)^{\frac{1}{2}+\varepsilon}\ll_{\varepsilon} \sum_{|t|< 2\sqrt{n}} n^{\frac{1}{2}+\varepsilon}\ll n^{1+\varepsilon}.\qedhere
\]
\end{proof}

We are now ready to prove Theorem \ref{thm:HmMk}.
\begin{proof}[Proof of Theorem \ref{thm:HmMk}]
By Proposition \ref{prop:HmMkbound}, we have
\[
\frac{H_{2k,m,M}(n)}{n^{k}H_{m,M}(n)}=C_k  + O_{k,M,\varepsilon}\left(\frac{n^{\frac{1}{2}+\varepsilon}}{H_{m,M}(n)}\right).
\]
The claim now follows by using the lower bound in Lemma \ref{lem:HmM0lower} to bound the error term. 
\end{proof}

\section{Proof of Theorem \ref{thm:SmMk}}\label{sec:EllCurveApplication}
The next lemma relates $\mathscr{H}_{2k,m,M}(p^r)$ to linear combinations of  $H_{2k,m,M}(p^{j})$ with $0\leq j\leq r$.

\begin{lemma}\label{lem:HpHrel}
Suppose that $p>3$, $m\in\Z$, $M\in\N$, and $k\in\N_0$.
 \noindent

\noindent
\begin{enumerate}[leftmargin=*,label={\rm(\arabic*)}]
\item If $r\leq 1$ or both $p\mid M$ and $p\nmid m$,  then 
\[
\mathscr{H}_{2k,m,M}\left(p^r\right)=H_{2k,m,M}\left(p^r\right)-\delta_{M\mid m}\delta_{k=0} H(4p).
\]
\item
If $p\nmid M$ and $r\geq 2$, then 
\begin{multline*}
\mathscr{H}_{2k,m,M}\left(p^r\right)=H_{2k,m,M}\left(p^r\right)-p^{2k+1} H_{2k,m\overline{p},M}\left(p^{r-2}\right)-\delta_{M\mid m}\delta_{k=0}\delta_{2\mid r}\frac{1}{2}\left(1-\left(\frac{-1}{p}\right)\right)\\
 - \delta_{M\mid m}\delta_{k=0}\delta_{2\nmid r}H(4p)- \tfrac{4^{k}}{12} p^{rk+1}\left(1-\frac{1}{p}\right)\varrho_{m,M}(p^{r})-\frac{1}{3}\left(1-\left(\frac{-3}{p}\right)\right)p^{rk}\sigma_{m,M}\left(p^r\right).
\end{multline*}
\end{enumerate}
\end{lemma}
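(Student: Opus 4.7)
The plan is to compute $H_{2k,m,M}(p^r)-\mathscr{H}_{2k,m,M}(p^r)=\sum_{t\equiv m\pmod{M},\,p\mid t} t^{2k}H(4p^r-t^2)$ directly in each case. For Case~(1), if $p\mid M$ and $p\nmid m$ then the joint conditions $t\equiv m\pmod{M}$ and $p\mid t$ are incompatible (they force $p\mid m$), so the sum is empty; the same reasoning shows $\delta_{M\mid m}=0$, and the formula is vacuous. If $r=1$, then $p>3$ gives $2\sqrt{p}<p$, so $t=0$ is the only multiple of $p$ in the range $|t|<2\sqrt{p}$, contributing $0^{2k}H(4p)=\delta_{k=0}H(4p)$ when $M\mid m$, as claimed.

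For Case~(2), since $\gcd(p,M)=1$ I substitute $t=ps$, turning the congruence into $s\equiv m\overline{p}\pmod{M}$ and the sum into $p^{2k}\sum_{s\equiv m\overline{p}\pmod{M}} s^{2k}H(p^2 N_s)$ with $N_s:=4p^{r-2}-s^2$. Applying Lemma~\ref{lem:HDHp^2relCohen} to $D=-N_s$ (or noting both sides vanish when $-N_s$ is not a discriminant, since $p^2\equiv 1\pmod{4}$) gives $H(p^2 N_s)=pH(N_s)+\epsilon_s$ where $\epsilon_s:=\bigl(1-((-N_s)_p/p)\bigr)H(|(-N_s)_p|)$. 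Summing the main $pH(N_s)$ terms reproduces the stated main contribution $p^{2k+1}H_{2k,m\overline{p},M}(p^{r-2})$, so the proof reduces to showing that $p^{2k}\sum_s s^{2k}\epsilon_s$ equals the four boundary corrections in the statement.

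The key claim, which I expect to be the main hurdle, is that $\epsilon_s=0$ unless $s\in\{0,\pm p^{(r-2)/2},\pm 2p^{(r-2)/2}\}$ (the latter two requiring $r$ even). To prove it, write $s=p^j s''$ with $p\nmid s''$; the range $(s'')^2\leq 4p^{r-2-2j}$ rules out $2j>r-2$. If $2j<r-2$, then $p\mid 4p^{r-2-2j}$ while $p\nmid(s'')^2$, forcing $\mathrm{ord}_p(N_s)=2j$, $\alpha=j$, and $(-N_s)_p\equiv(s'')^2\pmod{p}$, whence $\bigl((-N_s)_p/p\bigr)=\bigl((s'')^2/p\bigr)=1$ and $\epsilon_s=0$. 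The remaining case $2j=r-2$ forces $r$ even and $(s'')^2\leq 4$, i.e.\ $s''\in\{0,\pm 1,\pm 2\}$. A direct Cohen computation then gives $\epsilon_0=\frac{1}{2}(1-(\frac{-1}{p}))$ (even $r$, where $(-N_0)_p=-4$) or $\epsilon_0=H(4p)$ (odd $r$, where $(-N_0)_p=-4p$); $\epsilon_{\pm p^{(r-2)/2}}=\frac{1}{3}(1-(\frac{-3}{p}))$, using $p>3$ so that $N_s=3p^{r-2}$ has $p$-valuation exactly $r-2$ and $(-N_s)_p=-3$; and $\epsilon_{\pm 2p^{(r-2)/2}}=H(0)-pH(0)=(p-1)/12$ directly from $N_s=0$. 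Weighting by $p^{2k}s^{2k}$ and using the bijections $\sigma_{m\overline{p},M}(p^{r-2})=\sigma_{m,M}(p^r)$ and $\varrho_{m\overline{p},M}(p^{r-2})=\varrho_{m,M}(p^r)$, the three boundary families assemble into exactly the four correction terms of the statement; the delicate point is the bookkeeping of $p$-adic valuations across the subcases of Cohen's formula.
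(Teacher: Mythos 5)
Your proposal is correct and follows essentially the same route as the paper: substitute $t=ps$, apply Lemma \ref{lem:HDHp^2relCohen} to $D=s^2-4p^{r-2}$, observe that the Legendre symbol there equals $1$ (so the correction vanishes) except at the boundary values $s\in\{0,\pm p^{(r-2)/2},\pm 2p^{(r-2)/2}\}$, and evaluate those exceptional contributions with $H(0)=-\tfrac{1}{12}$, $H(3)=\tfrac13$, $H(4)=\tfrac12$. Your write-up simply makes explicit the valuation bookkeeping and the identifications $\sigma_{m\overline{p},M}(p^{r-2})=\sigma_{m,M}(p^r)$, $\varrho_{m\overline{p},M}(p^{r-2})=\varrho_{m,M}(p^r)$ that the paper leaves to the reader.
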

\begin{proof}
(1) By definition, we have 
\begin{align}\label{eqn:HpHincexc}
\mathscr{H}_{2k,m,M}(p^r)= H_{2k,m,M}(p^r)-\sum_{\substack{t\equiv m\pmod{M}\\ p\mid t}}  t^{2k} H\left(4p^r-t^2\right).
\end{align}
It is not hard to see that under the assumptions of (1) the only possible term in the second sum is $t=0$, which only occurs if $M\mid m$ and $k=0$, giving the claim.

\noindent
(2)  Letting $t\mapsto pt$, the second summand in \eqref{eqn:HpHincexc} equals (we assume that $r\geq 2$)
\begin{equation}\label{eqn:momentdivbyp}
p^{2k}\sum_{\substack{pt\equiv m\pmod{M}}} t^{2k} H \left(\left(4p^{r-2}-t^2\right)p^2\right)
=p^{2k}\sum_{\substack{t\equiv m\overline{p}\pmod{M}}} t^{2k} H \left(\left(4p^{r-2}-t^2\right)p^2\right).
\end{equation}
We next use Lemma \ref{lem:HDHp^2relCohen} with $D=t^2-4p^{r-2}$ to rewrite the terms in \eqref{eqn:momentdivbyp} with $|t|<2p^{\frac{r}{2}-1}$, where this condition is required to assure that $D<0$. If $0<|t|<2p^{\frac{r}{2}-1}$ and $p>3$,  then $\ord_p(t^2-4p^{r-2})=\ord_p(t^2)$ and hence $\alpha=\ord_p(t)$ in Lemma \ref{lem:HDHp^2relCohen}, yielding
\begin{multline}\label{eqn:scrHrewrite}
H\left(\left(4p^{r-2}-t^2\right)p^2\right)=p H\left(4p^{r-2}-t^2\right)\\
+\left(1-\left(\frac{\left(\frac{t}{p^{\alpha}}\right)^2-4p^{r-2-2\alpha}}{p}\right)\right)H\left(4p^{r-2-2\alpha}-\left(\frac{t}{p^{\alpha}}\right)^2\right).
\end{multline}
For $t=0$, the choice of $\alpha$ in Lemma \ref{lem:HDHp^2relCohen} is 
$\alpha=\frac{r}{2}-1$ if $r$ is even and $\alpha=\frac{r-3}{2}$ if $r$ is odd, 
so in this case Lemma \ref{lem:HDHp^2relCohen} gives
\[
H\left(4p^r\right) = pH\left(4p^{r-2}\right)+ \delta_{2\mid r}\left(1-\left(\frac{-1}{p}\right)\right)H(4) + \delta_{2\nmid r} H(4p).
\]
Noting that the Legendre symbol in \eqref{eqn:scrHrewrite} equals $1$ unless $\ord_p(t)=\frac{r}{2}-1$, plugging back into \eqref{eqn:momentdivbyp} for the $|t|<2p^{\frac{r}{2}-1}$ terms and using the evaluations $H(0)=-\frac{1}{12}$, $H(3)=\frac{1}{3}$,  and $H(4)=\frac{1}{2}$ easily yields the claim.
\end{proof}

We next bound $|E_{2k,m,M}(p^r)|$. 
\begin{lemma}\label{lem:Ebound}
For $k\in\N_0$ and $\varepsilon > 0$ we have 
\[
E_{2k,m,M}\left(p^r\right)=\frac 13 2^{2k-2}p^{rk+1} \varrho_{m,M}\left(p^r\right)  +O_{k,\varepsilon}\left(p^{rk+\frac{\delta_{k=0}}{2}+\varepsilon}\right).
\]
\end{lemma}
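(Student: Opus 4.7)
\textbf{Proof plan for Lemma \ref{lem:Ebound}.}

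The plan is to unpack the four summands in the definition of $E_{2k,m,M}(p^r)$, identify which one produces the stated main term, and then estimate the remaining summands trivially. The only input beyond elementary observations is Siegel's lower/upper bound on class numbers (Lemma \ref{lem:Siegel}) to control the $H(4p)$ contribution.

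First, I would observe that for even exponent $\kappa = 2k$ the weightings $\sgn(t)^{\kappa}$ in the definitions of $\varrho_{\kappa,m,M}$ and $\sigma_{\kappa,m,M}$ are identically $1$ on the (non-empty part of the) summation sets, so that $\varrho_{2k,m,M}(p^r)$ and $\sigma_{2k,m,M}(p^r)$ reduce to counting functions; in particular each is bounded above by $2$. Under this reduction, the summand
\[
\tfrac{1}{3}(p-1)\,2^{2k-2}\,p^{rk}\,\sigma_{2k,m,M}(p^r)
\]
coincides with $\tfrac{1}{3}(p-1)\,2^{2k-2}\,p^{rk}\,\varrho_{m,M}(p^r)$, which I split as
\[
\tfrac{1}{3}\,2^{2k-2}\,p^{rk+1}\,\varrho_{m,M}(p^r) \;-\; \tfrac{1}{3}\,2^{2k-2}\,p^{rk}\,\varrho_{m,M}(p^r).
\]
The first piece is exactly the main term claimed in the lemma, while the second piece is $O_k(p^{rk})$ because $\varrho_{m,M}(p^r) \leq 2$.

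Next I would bound the three remaining summands in the definition of $E_{2k,m,M}(p^r)$ term by term. The summand $\delta_{M\mid m}\delta_{k=0}\delta_{2\nmid r}\,H(4p)$ vanishes unless $k=0$, and in that case Lemma \ref{lem:Siegel} gives $H(4p)\ll_\varepsilon p^{1/2+\varepsilon}$; this fits into $O_\varepsilon(p^{rk+\delta_{k=0}/2+\varepsilon})$ since $rk=0$ and $\delta_{k=0}/2=1/2$. The summand $\delta_{M\mid m}\delta_{k=0}\delta_{2\mid r}\,\tfrac{1}{2}(1-(\tfrac{-1}{p}))$ is $O(1)$ and again only appears when $k=0$, so it is absorbed into the same error. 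Finally, the summand $\tfrac{1}{3}(1-(\tfrac{-3}{p}))\,p^{rk}\,\varrho_{2k,m,M}(p^r)$ is $O(p^{rk})$ by the uniform bound $\varrho_{2k,m,M}(p^r)\leq 2$, hence fits into $O(p^{rk+\varepsilon})$ (and into $O(p^{rk+1/2+\varepsilon})$ when $k=0$).

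Collecting the four pieces gives the claimed identity with total error $O_{k,\varepsilon}(p^{rk+\delta_{k=0}/2+\varepsilon})$. There is no real obstacle here: the only non-elementary input is the Siegel-type bound on $H(4p)$, which is needed precisely to absorb the odd-$r$ correction term in the $k=0$ case and which forces the $\tfrac{1}{2}$ in the exponent $rk+\delta_{k=0}/2$. Everything else is a matter of bookkeeping once one notes that the sign factors become trivial for even $\kappa$.
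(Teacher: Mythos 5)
Your proposal is correct and follows essentially the same route as the paper: split the fourth summand $\frac{1}{3}(p-1)2^{2k-2}p^{rk}\sigma_{2k,m,M}(p^r)$ into the main term plus an $O_k(p^{rk})$ remainder using $\sigma_{2k,m,M}(p^r)\leq 2$, bound $H(4p)$ via Siegel's estimate (Lemma \ref{lem:Siegel}) to account for the $\frac{\delta_{k=0}}{2}$ in the exponent, and absorb the remaining two summands trivially. You also correctly untangle the paper's crossed notation by identifying $\sigma_{2k,m,M}(p^r)$ with $\varrho_{m,M}(p^r)$, which is needed to match the stated main term.
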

\begin{proof} 
The first term in the definition of $E_{2k,m,M}(p^r)$ only occurs if 
 $k=0$ and we use the upper bound in Lemma \ref{lem:Siegel} to conclude that 
\[
H(4p)\ll_{\varepsilon} p^{\frac{1}{2}+\varepsilon}=p^{rk+\frac{\delta_{k=0}}{2}+\varepsilon}.
\]
The second term is clearly $O(1)$ and the third term is $O(p^{rk})$ because $0\leq \varrho_{m,M}(p^r)\leq 2$. We finally use $0\leq \sigma_{m,M}(p^r)\leq 2$ to split the last term in the definition of $E_{2k,m,M}(p^r)$ as 
\[
\frac 13 \left(p-1\right) 2^{2k-2}p^{rk}\sigma_{m,M}\left(p^r\right) = \frac 13 p^{2k-2}p^{rk}\sigma_{m,M}\left(p^r\right) +O_{k}\left(p^{rk}\right),
\]
yielding the claim. 
\end{proof}
We are now ready to prove Theorem \ref{thm:SmMk}.
\begin{proof}[Proof of Theorem \ref{thm:SmMk}]
\noindent
(1) By Lemma \ref{lem:S=Hp+E}, we have 
\begin{equation}\label{eqn:SmMkratio}
\frac{S_{2k,m,M}\left(p^r\right)}{p^{rk}S_{m,M}\left(p^r\right)} = \frac{\mathscr{H}_{2k,m,M}\left(p^r\right)+E_{2k,m,M}\left(p^r\right)}{p^{rk}\left(\mathscr{H}_{m,M}\left(p^r\right)+E_{m,M}\left(p^r\right)\right)}.
\end{equation}
We first consider $r=1$. By Lemma \ref{lem:HpHrel} (1) the right-hand side of \eqref{eqn:SmMkratio} equals 
\begin{equation*}
\frac{H_{2k,m,M}(p)+E_{2k,m,M}(p)}{p^{k}\left(H_{m,M}(p)-\delta_{M\mid m}\delta_{k=0}H(4p)+E_{m,M}(p)\right)}=\frac{\frac{H_{2k,m,M}(p)}{p^{k}H_{m,M}(p)} + \frac{E_{2k,m,M}(p)}{p^{k}H_{m,M}(p)}}{1-\delta_{M\mid m}\delta_{k=0}\frac{H(4p)}{H_{m,M}(p)}+ \frac{E_{m,M}(p)}{H_{m,M}(p)}}.
\end{equation*}
Using Lemma \ref{lem:Ebound} and Lemma \ref{lem:HmM0lower} together with Lemma \ref{lem:Siegel}, we obtain (note that $\varrho_{m,M}(p^r)=0$ for $r$ odd)
\begin{align*}
\frac{H(4p)}{H_{m,M}(p)}\ll_{M,\varepsilon} p^{-\frac{1}{2}+\varepsilon}, \quad
\frac{|E_{2k,m,M}(p)|}{p^{k}H_{m,M}(p)}\ll_{k,M,\varepsilon} p^{\frac{\delta_{k=0}}{2}-1+\varepsilon}\ll p^{-\frac{1}{2}+\varepsilon}.
\end{align*}
Using Theorem \ref{thm:HmMk} yields the case $r=1$.

For $r\geq 2$, we use Lemma \ref{lem:HpHrel} (2) to rewrite 
\begin{equation}\label{eqn:H2pHrel1}
\mathscr{H}_{2k,m,M}\left(p^r\right) = H_{2k,m,M}\left(p^r\right) - p^{2k+1}H_{2k,m\overline{p},M}\left(p^{r-2}\right)+O_{k,M}\left(p^{rk+1}\right).
\end{equation}
By Proposition \ref{prop:HmMkbound} and the upper bound in Lemma \ref{lem:HmM0lower}, we have 
\begin{align*}
 p^{2k+1}H_{2k,m\overline{p},M}\left(p^{r-2}\right)&= C_k p^{rk+1}H_{m\overline{p},M}\left(p^{r-2}\right) +O_{k,M,\varepsilon}\left(p^{2k+1+ (r-2)\left(k+\frac{1}{2}+\varepsilon\right)}\right)\\
&\ll_{k,M,\varepsilon} p^{r(k+1+\varepsilon) -1}+p^{r\left(k+\frac{1}{2}+\varepsilon\right)}\ll_{\varepsilon} p^{r(k+1+\varepsilon) -1}, 
\end{align*}
where in the last bound we use the fact that $r-1\geq \frac{r}{2}$ for $r\geq 2$. Plugging this back into \eqref{eqn:H2pHrel1} yields
\begin{equation}\label{eqn:Hpprbound}
\mathscr{H}_{2k,m,M}\left(p^r\right) = H_{2k,m,M}\left(p^r\right) +O_{k,M,\varepsilon}\left(p^{r(k+1+\varepsilon)-1}\right).
\end{equation}
By Lemma \ref{lem:Ebound}, we have
\begin{equation}\label{eqn:Eprbound}
E_{2k,m,M}\left(p^r\right) = O_{k}\left(p^{rk+1}\right)= O_{k}\left(p^{r(k+1+\varepsilon)-1}\right).
\end{equation}
Plugging \eqref{eqn:Hpprbound} and \eqref{eqn:Eprbound} into \eqref{eqn:SmMkratio} yields
\begin{align*}
\frac{S_{2k,m,M}\left(p^r\right)}{p^{rk}S_{m,M}\left(p^2\right)} &= \frac{H_{2k,m,M}\left(p^r\right)+O_{k,M,\varepsilon}\left(p^{r(k+1+\varepsilon)-1}\right)}{p^{rk}H_{m,M}\left(p^r\right)+O_{k,M,\varepsilon}\left(p^{r(k+1+\varepsilon)-1}\right)}
= \frac{\frac{H_{2k,m,M}\left(p^r\right)}{p^{rk}H_{m,M}\left(p^r\right)} + O_{k,M,\varepsilon}\left(\frac{p^{r(1+\varepsilon)-1}}{H_{m,M}\left(p^r\right)}\right)}{ 1+ O_{k,M,\varepsilon}\left(\frac{p^{r(1+\varepsilon)-1}}{H_{m,M}\left(p^r\right)}\right)}.
\end{align*}
The lower bound in Lemma \ref{lem:HmM0lower} now yields the claim.

\noindent
(2)
First assume that $p\mid M$. Since $p\nmid m$ in this case by assumption, Lemma \ref{lem:HpHrel} (1) yields that $\mathscr{H}_{2k,m,M}(p^r)=H_{2k,m,M}(p^r)$ and by \eqref{eqn:SmMkratio} we have
\[
\frac{S_{2k,m,M}\left(p^r\right)}{p^{rk}S_{m,M}\left(p^r\right)} = \frac{H_{2k,m,M}\left(p^r\right)+E_{2k,m,M}\left(p^r\right)}{p^{rk}\left(H_{m,M}\left(p^r\right)+E_{m,M}\left(p^r\right)\right)}=\frac{\frac{H_{2k,m,M}\left(p^r\right)}{p^{rk}H_{m,M}\left(p^r\right)} + \frac{E_{2k,m,M}(p^r)}{p^{rk}H_{m,M}\left(p^r\right)}}{1+\frac{E_{m,M}(p^r)}{H_{m,M}\left(p^r\right)}}.
\]
By Lemma \ref{lem:Ebound} and Lemma \ref{lem:HmM0lower} we obtain 
\[
\frac{E_{2k,m,M}\left(p^r\right)}{p^{rk}H_{m,M}\left(p^r\right)}\ll_{p,M,\varepsilon}  p^{r\left(-\frac{1}{2}+\varepsilon\right)}
\]
and the proof follows as in the case $r=1$.  
 
Next suppose that $p\nmid M$. We first rewrite the numerator of \eqref{eqn:SmMkratio}. Plugging Theorem \ref{thm:HmMk} into the right-hand side of Lemma \ref{lem:HpHrelGeneral} yields 
\begin{equation}\label{eqn:HpHrelThmHmMk}
\mathscr{H}_{2k,m,M}\left(p^r\right)= \sum_{\substack{\ell\pmod{p}\\ p\nmid (m+M\ell )}} p^{rk} H_{m+M\ell,Mp}\left(p^r\right)\left(C_k+O_{k,p,M,\varepsilon}\left(p^{r\left(-\frac{1}{2}+\varepsilon\right)}\right)\right).
\end{equation}
We then insert Lemma \ref{lem:HpHrelGeneral} into the right-hand side of \eqref{eqn:HpHrelThmHmMk} to obtain
\[
\mathscr{H}_{2k,m,M}\left(p^r\right)= p^{rk}\mathscr{H}_{m,M}\left(p^r\right)\left( C_k+O_{k,p,M,\varepsilon}\left(p^{r\left(-\frac{1}{2}+\varepsilon\right)}\right)\right).
\]
Plugging back into \eqref{eqn:SmMkratio} yields 
\begin{equation}\label{eqn:SmMkratioBigrSum}
\frac{S_{2k,m,M}\left(p^r\right)}{p^{rk}S_{m,M}\left(p^r\right)} = \frac{ C_k+O_{k,p,M,\varepsilon}\left(p^{r\left(-\frac{1}{2}+\varepsilon\right)}\right) + O\left(\frac{E_{2k,m,M}\left(p^r\right)}{p^{rk}\mathscr{H}_{m,M}\left(p^r\right)}\right)}{1+ O\left(\frac{E_{m,M}\left(p^r\right)}{ \mathscr{H}_{m,M}\left(p^r\right)}\right)}.
\end{equation}
By Lemma \ref{lem:HpHrelGeneral} and Lemma \ref{lem:HmM0lower}, for any choice $\lambda\pmod{p}$ such that $p\nmid (m+M\lambda)$ 
\[
\mathscr{H}_{m,M}\left(p^r\right)= \sum_{\substack{\ell\pmod{p}\\ p\nmid (m+M\ell)}}H_{m+M\ell,Mp}(p^r)\geq H_{m+M \lambda,Mp}\left(p^r\right)\gg_{p,M,\varepsilon} p^{r(1-\varepsilon)}.
\]
Hence by Lemma \ref{lem:Ebound} we have
\[
\frac{E_{2k,m,M}\left(p^r\right)}{p^{rk}\mathscr{H}_{m,M}\left(p^r\right)}\ll_{k,p,M,\varepsilon} \frac{p^{rk+\frac{1}{2}+\varepsilon}}{p^{r(k+1-\varepsilon)}}\ll_{p,\varepsilon} p^{r\left(-1+\varepsilon\right)}.
\]
The claim now follows from \eqref{eqn:SmMkratioBigrSum}.
\end{proof}

\end{document}